\renewcommand{\phi}{\varphi}
\renewcommand{\epsilon}{\varepsilon}
\renewcommand{\restriction}[0]{|}
\newcommand{\Size}[1]{\left\lvert #1 \right\rvert}
\newcommand{\Span}[1]{\left\langle\, #1 \,\right\rangle}
\newcommand{\Set}[1]{\left\{ #1 \right\}}
\newcommand{\Hc}[0]{\mathcal{H}}
\newcommand{\Jc}[0]{\mathcal{J}}
\newcommand{\Ic}[0]{\mathcal{I}}
\newcommand{\norm}[0]{\trianglelefteq}
\renewcommand{\theta}[0]{\vartheta}
\renewcommand{\phi}[0]{\varphi}
\newcommand{\F}{\mathbb{F}}
\DeclareMathOperator{\PSL}{PSL}
\DeclareMathOperator{\PGL}{PGL}
\DeclareMathOperator{\Aut}{Aut}
\DeclareMathOperator{\Inn}{Inn}
\DeclareMathOperator{\Hol}{Hol}
\DeclareMathOperator{\NHol}{NHol}
\DeclareMathOperator{\Out}{Out}
\DeclareMathOperator{\inv}{inv}
\newtheorem{dummy}{Dummy}
\numberwithin{dummy}{section}
\numberwithin{figure}{section}
\newtheorem{theorem}[dummy]{Theorem}
\newtheorem{remark}[dummy]{Remark}
\newtheorem{lemma}[dummy]{Lemma}
\newtheorem{proposition}[dummy]{Proposition}
\theoremstyle{definition}
\newtheorem{definition}[dummy]{Definition}
\newtheorem{notation}[dummy]{Notation}
\newtheorem{example}[dummy]{Example}
\theoremstyle{remark}
\begin{document}

\date{13 October 2017, 11:45 CEST --- Version 9.08%
}

\title[The holomorph of a finite perfect group]%
      {Groups that have the same holomorph as\\
       a finite perfect group}
      
\author{A. Caranti and F.~Dalla Volta}

\address[A.~Caranti]%
 {Dipartimento di Matematica\\
  Universit\`a degli Studi di Trento\\
  via Sommarive 14\\
  I-38123 Trento\\
  Italy} 

\email{andrea.caranti@unitn.it} 

\urladdr{http://science.unitn.it/$\sim$caranti/}


\address[F.~Dalla Volta]{Dipartimento di Matematica e Applicazioni\\
  Edificio U5\\
  Universit\`a degli Studi di Milano--Bicocca\\
  Via R.~Cozzi, 53\\
  I-20126 Milano\\
  Italy}

\email{francesca.dallavolta@unimib.it}

\urladdr{http://www.matapp.unimib.it/$\sim$dallavolta/}

\subjclass[2010]{20B35 20D45 20D40}

\keywords{holomorph, multiple holomorph, regular subgroups, finite
  perfect groups, central products, automorphisms} 

\begin{abstract}    
 We describe  the groups  that have  the same  holomorph as  a finite
 perfect group. Our results are complete for centerless groups.

 When  the center  is  non-trivial, some  questions  remain open.  The
 peculiarities  of the  general case  are illustrated  by a  couple of
 examples that might be of independent interest.
\end{abstract}

\thanks{The authors are members of INdAM---GNSAGA. The first author
gratefully acknowledges support from the Department of Mathematics of
the University of Trento.}

\maketitle

\thispagestyle{empty}

\bibliographystyle{amsplain}

\section{Introduction}

We are concerned  with the question, when do two  groups have the same
holomorph?  Recall  that the  \emph{holomorph} of a  group $G$  is the
natural  semidirect product  $\Aut(G) G$  of $G$  by its  automorphism
group $\Aut(G)$. To put this problem in proper context, recall that if
$\rho : G \to S(G)$ is  the right regular representation of $G$, where
$S(G)$  is   the  group   of  permutations  on   the  set   $G$,  then
$N_{S(G)}(\rho(G)) =  \Aut(G) \rho(G)$ is isomorphic  to the holomorph
of $G$. We will also refer  to $N_{S(G)}(\rho(G))$ as the holomorph of
$G$, and  write it as  $\Hol(G)$. More  generally, if  $N \le
S(G)$ is a  regular subgroup, then $N_{S(G)}(N)$ is  isomorphic to the
holomorph of $N$.  We therefore begin to make the  above question more
precise by  asking for which  regular subgroup  $N$ of $S(G)$  one has
$N_{S(G)}(N) = \Hol(G)$.

W.H.~Mills has noted~\cite{Mills-multi} that such an $N$ need not be
isomorphic   to    $G$   (see   Example~\ref{ex:notiso},    but   also
Example~\ref{ex:non-iso} below, and the comment following it). In this
paper, we  will be interested in determining the following set, and
some naturally related ones
\begin{definition}\label{def:same}
  \begin{equation*}
    \Hc(G)
    =
    \Set{ N \le S(G) : \text{$N$ is regular, $N \cong G$ and $N_{S(G)}(N)
        = \Hol(G)$} }.
  \end{equation*}
\end{definition}
G.A.~Miller has  shown~\cite{Miller-multi} that the so-called
\emph{multiple holomorph} of $G$ 
\begin{equation*}
  \NHol(G) = N_{S(G)}(\Hol(G))
\end{equation*}
acts  transitively on $\Hc(G)$, and  thus the group
\begin{equation*}
  T(G)  = \NHol(G)  / \Hol(G)
\end{equation*}
acts regularly  on $\Hc(G)$.

Recently T.~Kohl  has described~\cite{Kohl-multi} the  set $\Hc(G)$
and the group $T(G)$
for $G$ dihedral or generalized quaternion.
In~\cite{fgag}, we have redone,  via a commutative
ring connection, the work of
Mills~\cite{Mills-multi}, which determined $\Hc(G)$
and $T(G)$ for $G$ a finitely generated abelian group.

In  this   paper  we  consider  the   case  when  $G$  is   a  finite,
\emph{perfect}  group,  that  is,  $G$  equals  its  derived  subgroup
$G'$. 

If
$G$  has also  trivial center,  then  one can  show that  if $N  \norm
\Hol(G)$ is a regular subgroup, then $N \in \Hc(G)$ (in particular, $N
\cong G$).  The elements of  $\Hc(G)$ can be  described in terms  of a
Krull-Remak-Schmidt decomposition of $G$ as  a group with $\Aut(G)$ as
group  of  operators, that is, in terms of the unique decomposition of
$G$ as a direct product of non-trivial characteristic subgroups that
are indecomposable as the direct product of characteristic
subgroups. The group $T(G)$ turns out to be an elementary abelian
$2$-group. 

If $G$ has non-trivial center, the  regular subgroups $N$ such that $N
\norm \Hol(G)$ can still be described in terms of the decomposition of
$G$  as the  central product  of non-trivial,  perfect, characteristic
subgroups,   that  are   indecomposable  as   a  central   product  of
characteristic subgroups. 

However,   these   $N$   need   not  be   isomorphic   to   $G$   (see
Example~\ref{ex:non-iso} below, and  the   comment  following  it),   and  the
structure of $T(G)$ in this case is not clear to us at the moment. The
difficulties  here are  illustrated by  the following  examples, which
might be of independent interest.
\begin{example}\label{ex:non-iso}
  There  is  a  group  $G$  which   is  the  central  product  of  two
  characteristic  subgroups  $Q_{1},  Q_{2}$,  such that  $G$  is  not
  isomorphic to the group $(G, \circ)$ obtained from $G$ by replacing
  $Q_{1}$ with its opposite.
\end{example}
Recall that the opposite of a group $Q$ is the group obtained by
exchanging the order of factors in the product of $Q$.

We will see  in Section~\ref{sec:main} that the group  $(G, \circ)$ is
isomorphic to  a regular subgroup  $N$ of $S(G)$ such that  $N_{S(G)}(N) =
\Hol(G)$. Therefore in our context the latter condition does not imply $N
\cong G$.
\begin{example}\label{ex:tba}
  There  is  a  group  $G$  which is  the  central  product  of  three
  characteristic subgroups  $Q_{1}, Q_{2},  Q_{3}$ such that  $Q_{1}$ and
  $Q_{2}$ are  not characteristic  in the group  obtained from  $G$ by
  replacing $Q_{1}$ with its opposite.
\end{example}
As we will see in Section~\ref{sec:main}, this example shows that if
$N \norm \Hol(G)$ is a regular subgroup, for $G$ perfect, we may well have that
$N_{S(G)}(N)$ properly contains $\Hol(G)$. However, if $\Aut(G)$ and
$\Aut(N)$ have the same order, then $N_{S(G)}(N) = \Hol(G)$ (see
Lemma~\ref{lemma:reghol}\eqref{item:reghol-2}). 

Example~\ref{ex:non-iso}~and     \ref{ex:tba}     are     given     in
Subsection~\ref{sub:centermore} as  Proposition~\ref{prop:non-iso} and
Proposition~\ref{prop:not-char}.

The      plan     of     the      paper     is      the     following.
Sections~\ref{sec:holomorph}~and   \ref{sec:same-holomorph}  introduce
the holomorph and  the multiple holomorph.  In these  sections, and in
the  following ones,  we have  chosen  to repeat  some elementary  and
well-known arguments, when we have  deemed them handy for later usage.
In  Sections~\ref{sec:regular}~and \ref{sec:normal-regular} we  give a
description of  the regular subgroups  $N$ of $\Hol(G)$, and  of those
that are  normal in $\Hol(G)$, in terms  of a certain map  $\gamma : G
\to \Aut(G)$. This leads to a group operation $\circ$ on $G$ such that
$N$ is isomorphic to $(G, \circ)$. In Section~\ref{sec:commutators} we
show   that  the  values   of  $\gamma$   on  commutators   are  inner
automorphisms, and this leads us to consider perfect groups.

In Section~\ref{sec:main} we study the case of finite, perfect groups.
We first  obtain a  description of the  normal subgroups  of $\Hol(G)$
that are  regular, in terms of certain  central product decompositions
of  $G$   (Theorem~\ref{thm:cpd}). We then discuss separately, as
explained above, the centerless case, where we can give a full
picture, and the general case, where some questions remain
open. Section~\ref{sec:After-Holt} deals with a
representation-theoretic method that is critical  for the
construction of the examples.

We note, as in~\cite{Kohl-multi}, that this work is related to the
enumeration of Hopf-Galois structures on separable field
extensions, as C.~Greither and B.~Pareigis have
shown~\cite{GrePar} that these structures can be described through the
regular subgroups of a suitable symmetric group, which are normalized
by a given regular subgroup; this connection is exploited 
in the work of L.~Childs~\cite{Chi},
N.P.~Byott~\cite{Byo}, and Byott and Childs~\cite{ByoChi}.

Our discussion of (normal) regular subgroups touches also on the
subject of skew braces \cite{bra}, see Remark~\ref{rem:braces}.

We  are  very   grateful  to  Robert  Guralnick   for  several  useful
conversations. We are indebted to  Derek Holt for kindly explaining to
us in careful detail the example and the construction method which led
to Proposition~\ref{lemma:After-Holt}.

\section{The holomorph of a group}
\label{sec:holomorph}

\begin{notation}
  We write permutations as exponents, and denote compositions
  of maps by juxtaposition. We compose maps  left-to-right.
\end{notation}

The holomorph of a group $G$ is the natural semidirect product 
\begin{equation*}
  \Aut(G) G
\end{equation*}
of $G$ by its automorphism group $\Aut(G)$. Let $S(G)$ be the group of
permutations on the set $G$. Consider the right and the left regular
representations of $G$:
\begin{equation*}
  \left\{
  \begin{aligned}
    \rho :\ &G \to S(G)
    \\&g \mapsto (x \mapsto x g) 
  \end{aligned}
  \right.
  \qquad
  \left\{
  \begin{aligned}
    \lambda :\ &G \to S(G)
    \\&g \mapsto (x \mapsto g x). 
  \end{aligned}
  \right.
\end{equation*}
\begin{notation}
  We denote the inversion map $g \mapsto g^{-1}$ on a group
  by $\inv$. 
\end{notation}
\begin{definition}
  The \emph{opposite} of the group $G$  is the group obtained by exchanging the
  order of factors in the product of $G$.
\end{definition}
The map $\inv$ is an isomorphism between a group $G$ and its opposite;
compare 
with Proposition~\ref{prop:right-and-left}\eqref{item:inversion} below.

The following well-known fact should be compared with Lemma~2.4.2
of the paper~\cite{GrePar} in which Greither and Pareigis set up the
connection, already mentioned in the Introduction, between Hopf Galois
extensions and regular subgroups of symmetric groups.
\begin{proposition}\label{prop:right-and-left}\

  \begin{enumerate}
  \item\label{item:right-and-left} $C_{S(G)}(\rho(G)) = \lambda(G)$
    and $C_{S(G)}(\lambda(G)) =  \rho(G)$.
  \item\label{lemma:AutG-is-stabilizer} The stabilizer of $1$ in
    $N_{S(G)}(\rho(G))$ is $\Aut(G)$. 
  \item\label{item:normalizer}  We have 
    \begin{equation*}
      N_{S(G)}(\rho(G))  =  \Aut(G)   \rho(G)  =  \Aut(G)  \lambda(G)  =
      N_{S(G)}(\lambda(G)),
    \end{equation*}
    and this group is  isomorphic to the holomorph  $\Aut(G) G$
    of $G$.
  \item\label{item:inversion}  Inversion on  $G$ normalizes
    $N_{S(G)}(\rho(G))$,
    centralizes $\Aut(G)$,  and conjugates $\rho(G)$  to $\lambda(G)$,
    that is
    \begin{equation*}
      \rho(G)^{\inv} = \lambda(G).
    \end{equation*}
  \end{enumerate}
\end{proposition}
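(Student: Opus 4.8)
The plan is to verify each of the four statements in Proposition~\ref{prop:right-and-left} using nothing but direct computation with the explicit formulas for $\rho$, $\lambda$, and the action of $S(G)$ written as exponents (composing left-to-right). Throughout I would use that an element $\sigma \in S(G)$ normalizes $\rho(G)$ precisely when for every $g \in G$ there is some $h \in G$ with $\rho(g)^{\sigma} = \rho(h)$, i.e. $(x^{\sigma^{-1}} g)^{\sigma} = x h$ for all $x$.

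For part~\eqref{item:right-and-left}, I would first show $\lambda(g)$ commutes with every $\rho(h)$: applying $\rho(h)\lambda(g)$ and $\lambda(g)\rho(h)$ to an arbitrary $x$ both give $gxh$, using associativity. For the reverse inclusion I would take $\sigma \in C_{S(G)}(\rho(G))$, set $c = 1^{\sigma}$, and compute $x^{\sigma} = (1 \cdot x)^{\sigma} = 1^{\sigma \rho(x)}$ — no wait, carefully: $x^{\sigma} = (1^{\rho(x)})^{\sigma} = 1^{\rho(x)\sigma} = 1^{\sigma\rho(x)} = (1^{\sigma})\rho(x) = cx$, so $\sigma = \lambda(c)$. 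The statement for $C_{S(G)}(\lambda(G))$ is symmetric, with $x^{\sigma} = (x^{\lambda(1)})^{\sigma} = \dots = xc$, hence $\sigma = \rho(c)$. For part~\eqref{lemma:AutG-is-stabilizer}, if $\sigma$ stabilizes $1$ and normalizes $\rho(G)$, then $\rho(g)^{\sigma} = \rho(g^{\sigma})$ for each $g$ (evaluate both sides at $1$: $1^{\sigma^{-1}\rho(g)\sigma} = g^{\sigma}$, and $\rho(g)^{\sigma}$ being some $\rho(h)$ forces $h = g^{\sigma}$); then from $\rho(gh)^{\sigma} = \rho(g)^{\sigma}\rho(h)^{\sigma}$ one reads off $(gh)^{\sigma} = g^{\sigma}h^{\sigma}$, so $\sigma \in \Aut(G)$. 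Conversely each automorphism fixes $1$ and conjugates $\rho(g)$ to $\rho(g^{\sigma})$, hence normalizes $\rho(G)$.

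Part~\eqref{item:normalizer} then follows by a standard orbit-stabilizer / factorization argument: $N_{S(G)}(\rho(G))$ contains $\rho(G)$, which is transitive on $G$, so $N_{S(G)}(\rho(G)) = (\text{stabilizer of }1)\cdot\rho(G) = \Aut(G)\rho(G)$ by part~\eqref{lemma:AutG-is-stabilizer}; moreover this product is semidirect since $\Aut(G) \cap \rho(G)$ fixes $1$ and hence is trivial inside $\rho(G)$, giving the isomorphism with the holomorph $\Aut(G)G$. For the equalities with $\lambda$: inner automorphisms of $G$ lie in $\Aut(G)$ and for $g \in G$ one checks $\lambda(g) = \rho(g)\cdot\iota_g$ where $\iota_g$ is conjugation by $g$ (both send $x \mapsto gxg^{-1}\cdot g = gx$), wait — more simply, $\lambda(g)^{-1}\rho(g)$ is conjugation, so $\lambda(G) \subseteq \Aut(G)\rho(G)$ and symmetrically $\rho(G) \subseteq \Aut(G)\lambda(G)$; combined with the fact that $\Aut(G)$ normalizes both $\rho(G)$ and $\lambda(G)$ (automorphisms conjugate $\rho(g) \mapsto \rho(g^\sigma)$ and likewise for $\lambda$), one gets $\Aut(G)\rho(G) = \Aut(G)\lambda(G)$ and that both equal their own normalizers — in particular $N_{S(G)}(\lambda(G))$, which by the symmetric version of the earlier parts is $\Aut(G)\lambda(G)$.

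Finally, for part~\eqref{item:inversion} I would compute directly: $\rho(g)^{\inv}$ sends $x \mapsto x^{\inv} \mapsto x^{-1}g \mapsto (x^{-1}g)^{-1} = g^{-1}x$, which is exactly $\lambda(g^{-1})$, so $\rho(g)^{\inv} = \lambda(g^{-1})$ and $\rho(G)^{\inv} = \lambda(G)$. For an automorphism $\alpha$, $\alpha^{\inv}$ sends $x \mapsto x^{-1} \mapsto (x^{-1})^{\alpha} = (x^{\alpha})^{-1} \mapsto x^{\alpha}$, so $\alpha^{\inv} = \alpha$; thus $\inv$ centralizes $\Aut(G)$. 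Since $\inv$ normalizes $\rho(G)$ (sending it to $\lambda(G) = \Aut(G)\rho(G)\cap(\dots)$, in any case into $N_{S(G)}(\rho(G))$) and centralizes $\Aut(G)$, and $N_{S(G)}(\rho(G)) = \Aut(G)\rho(G)$, it normalizes the whole group. I do not expect any genuine obstacle here — everything is a finite sequence of one-line verifications; the only thing to be careful about is bookkeeping with the left-to-right composition convention and the exponential notation, so that associativity is applied in the right direction at each step.
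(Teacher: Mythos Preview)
The paper does not actually give a proof of this proposition: it is introduced as a ``well-known fact'' and stated without argument. Your proposal supplies exactly the standard verification one would expect---centralizer by evaluating at $1$, stabilizer of $1$ via $\rho(g)^{\sigma}=\rho(g^{\sigma})$, the factorization $N_{S(G)}(\rho(G))=\Aut(G)\rho(G)$ from transitivity of $\rho(G)$, and direct computation for $\inv$---and each step is correct. The occasional self-corrections (``no wait'') and the slightly garbled sentence about $\lambda(G)=\Aut(G)\rho(G)\cap(\dots)$ should be cleaned up in a final write-up, but the underlying argument is sound and nothing is missing.
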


\begin{notation}
  We write  $\Hol(G) = N_{S(G)}(\rho(G))$.

  We will  refer to either of
  the isomorphic groups
  $N_{S(G)}(\rho(G))$ and $\Aut(G) G$ as the holomorph of $G$.
\end{notation}



We now record another well-known fact.
\begin{lemma}\label{lemma:reghol}\

  \begin{enumerate}
  \item\label{item:reghol-1} Let $\Omega$ be a set, and $G$ a regular
    subgroup of 
    $S(\Omega)$. Then there is an isomorphism $S(\Omega) \to S(G)$ that
    sends $G$ to $\rho(G)$, and thus $N_{S(\Omega)}(G)$ to $\Hol(G)$.
  \item\label{item:reghol-2}   If $N \le S(G)$ is a regular subgroup, then
    $N_{S(G)}(N)$ is isomorphic to the holomorph of $N$.
  \end{enumerate}
\end{lemma}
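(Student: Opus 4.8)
The plan is to prove part~\eqref{item:reghol-1} by transporting the regular action of $G$ on $\Omega$ to the regular action of $G$ on itself, and then to read off part~\eqref{item:reghol-2} as the special case $\Omega = G$ combined with Proposition~\ref{prop:right-and-left}\eqref{item:normalizer}.

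For~\eqref{item:reghol-1} I would begin by fixing a base point $\omega_{0} \in \Omega$. Since $G$ is regular on $\Omega$, the orbit map
\begin{equation*}
  \phi : G \to \Omega, \qquad g \mapsto \omega_{0}^{\,g},
\end{equation*}
is a bijection: transitivity gives surjectivity and freeness gives injectivity. Any bijection $\phi$ between two sets induces, by conjugation, an isomorphism $\Psi : S(\Omega) \to S(G)$, $\tau \mapsto \phi\,\tau\,\phi^{-1}$, so that $g^{\Psi(\tau)} = \phi^{-1}\bigl(\phi(g)^{\tau}\bigr)$; that $\Psi$ is a homomorphism is immediate (it is conjugation, hence insensitive to the composition convention), and it is bijective with inverse $\sigma \mapsto \phi^{-1}\,\sigma\,\phi$.

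The one computation worth writing out is that $\Psi$ carries $G$, viewed inside $S(\Omega)$, onto $\rho(G)$. For $h, g \in G$ one has $\phi(g)^{h} = \bigl(\omega_{0}^{\,g}\bigr)^{h} = \omega_{0}^{\,gh} = \phi(gh)$, using left-to-right composition, whence $g^{\Psi(h)} = \phi^{-1}\bigl(\phi(gh)\bigr) = gh = g^{\rho(h)}$. Thus $\Psi(h) = \rho(h)$ for every $h \in G$, so $\Psi(G) = \rho(G)$. Since an isomorphism of symmetric groups carries normalizers to normalizers, $\Psi$ sends $N_{S(\Omega)}(G)$ onto $N_{S(G)}(\rho(G)) = \Hol(G)$, which is what~\eqref{item:reghol-1} asserts.

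Finally, \eqref{item:reghol-2} is the instance of~\eqref{item:reghol-1} in which $\Omega$ is the underlying set of $G$ and the regular subgroup there is $N$: there is an isomorphism $S(G) \to S(N)$ sending $N$ to $\rho(N)$ and hence $N_{S(G)}(N)$ to $N_{S(N)}(\rho(N)) = \Hol(N)$, and by Proposition~\ref{prop:right-and-left}\eqref{item:normalizer} the latter is isomorphic to the holomorph $\Aut(N)\,N$ of $N$. The only point requiring care is bookkeeping with the left-to-right composition convention — one must make sure $\tau \mapsto \phi\,\tau\,\phi^{-1}$ really lands in $S(G)$ and that $\bigl(\omega_{0}^{\,g}\bigr)^{h} = \omega_{0}^{\,gh}$ — but there is no genuine obstacle here; the content is entirely in the bijection $\phi$.
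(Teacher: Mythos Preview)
Your proposal is correct and matches the paper's approach: both arguments use the orbit-map bijection $\phi$ from the regular subgroup to the underlying set (in the paper's proof of~\eqref{item:reghol-2}, $\phi : N \to G$, $n \mapsto 1^{n}$) and conjugate by it to carry the regular subgroup to its right regular representation. The only cosmetic difference is that the paper writes out only part~\eqref{item:reghol-2} directly, while you prove~\eqref{item:reghol-1} first and specialize.
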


It is because of Lemma~\ref{lemma:reghol}\eqref{item:reghol-1} that we
have done without a set $\Omega$, and started directly with $S(G)$ and
its regular subgroup $\rho(G)$.

\begin{proof}
  We only treat~\eqref{item:reghol-2}, for further reference.

  Consider for such a regular subgroup $N$ the bijection
  \begin{align*}
    \phi :\ &N \to G
    \\&n \mapsto 1^{n}.
  \end{align*}
  Then
  \begin{align*}
    \psi :\ &S(G) \to S(N)
          \\&\sigma \mapsto \phi \sigma \phi^{-1}
  \end{align*}
  (recall that we compose left-to-right) is an isomorphism, which maps $N$ onto
  $\rho(N)$, as for $x, n \in N$ we have
  \begin{equation*}
    x^{\phi n \phi^{-1}}
    =
    (1^{x n})^{\phi^{-1}}
    =
    x n,
  \end{equation*}
  that is, 
  \begin{equation*}
    \phi n \phi^{-1} = \rho(n).
  \end{equation*}
  In particular, $\psi(N_{S(G)}(N)) = N_{S(N)}(\rho(N)) = \Hol(N)$.
\end{proof}

\section{Groups with the same holomorph}
\label{sec:same-holomorph}

In  view  of  Lemma~\ref{lemma:reghol}\eqref{item:reghol-2},  one  may
inquire, what  are the regular subgroups  $N \le S(G)$ for which
\begin{equation}\label{eq:samehol}
  \Hol(N) \cong N_{S(G)}(N) = N_{S(G)}(\rho(G)) = \Hol(G).
\end{equation}

W.H.~Mills has noted in~\cite{Mills-multi} that if~\eqref{eq:samehol}
holds, then $G$ and $N$ need 
not be isomorphic. 
\begin{example}\label{ex:notiso}
  The dihedral and the generalized quaternion groups of order
  $2^{n}$, for $n \ge 4$, have the same normalizer in $S_{2^{n}}$, in
  suitable regular representations. \cite[3.10]{Kohl-twisted},
  \cite[2.1]{Kohl-multi}. 
\end{example}

When we restrict our attention to  the regular subgroups $N$ of $S(G)$
for which $N_{S(G)}(N) = \Hol(G)$ and $N  \cong G$, we can appeal to a
result   of    G.A.~Miller~\cite{Miller-multi}.    Miller    found   a
characterization  of these  subgroups in  terms of  the \emph{multiple
  holomorph} of $G$
\begin{equation*}
  \NHol(G) = N_{S(G)}(\Hol(G)).
\end{equation*}
Consider the set
\begin{equation*}
  \Hc(G)
  =
  \Set{ N \le S(G) : \text{$N$ is regular, $N \cong G$ and $N_{S(G)}(N)
    = \Hol(G)$} }.
\end{equation*}
Using the well-known fact that two regular subgroups of $S(G)$ are
isomorphic if and only if they are conjugate in $S(G)$, Miller showed
that the group $\NHol(G)$ acts transitively on $\Hc(G)$ by
conjugation. (See Lemma~\ref{lemma:conjugation} in the next Section
for a comment on this.) Clearly the 
stabilizer in $\NHol(G)$ of any element $N \in \Hc(G)$ is $N_{S(G)}(N)
= \Hol(G)$. We obtain
\begin{theorem}
  The group
  \begin{equation*}
    T(G) = \NHol(G) / \Hol(G)
  \end{equation*}
  acts regularly on $\Hc(G)$ by conjugation.
\end{theorem}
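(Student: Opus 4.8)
The plan is to combine the transitivity of the $\NHol(G)$-action on $\Hc(G)$, already established by Miller's argument recalled above, with the standard orbit--stabilizer principle. First I would observe that $\Hol(G) = N_{S(G)}(\rho(G))$ is a subgroup of $\NHol(G) = N_{S(G)}(\Hol(G))$: indeed any group normalizes its own normalizer's... more carefully, $\Hol(G)$ normalizes $\rho(G)$ by definition, hence $\Hol(G) \le N_{S(G)}(\rho(G)) = \Hol(G)$, so what we actually need is that $\Hol(G) \norm \NHol(G)$, which is immediate from the definition of $\NHol(G)$ as the normalizer of $\Hol(G)$ in $S(G)$. In particular $T(G) = \NHol(G)/\Hol(G)$ is a genuine quotient group.

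Next I would make precise that $\NHol(G)$ acts on $\Hc(G)$ by conjugation: if $x \in \NHol(G)$ and $N \in \Hc(G)$, then $N^{x}$ is again regular (conjugation by a permutation preserves regularity), $N^{x} \cong N \cong G$, and $N_{S(G)}(N^{x}) = N_{S(G)}(N)^{x} = \Hol(G)^{x} = \Hol(G)$ since $x$ normalizes $\Hol(G)$; hence $N^{x} \in \Hc(G)$. So $\NHol(G)$ permutes $\Hc(G)$, and by the cited result of Miller this action is transitive. The stabilizer of a fixed $N \in \Hc(G)$ under this action is $\{\,x \in \NHol(G) : N^{x} = N\,\} = N_{\NHol(G)}(N) = N_{S(G)}(N) \cap \NHol(G)$; but $N_{S(G)}(N) = \Hol(G) \le \NHol(G)$, so this stabilizer is exactly $\Hol(G)$.

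Finally, since $\Hol(G) \norm \NHol(G)$, the stabilizer $\Hol(G)$ is the same for every point of the (transitive) orbit $\Hc(G)$ --- a normal point-stabilizer forces the action to be free on the orbit it stabilizes, or one can simply quote that the induced action of the quotient $\NHol(G)/\Hol(G)$ on $\Hc(G)$ is well defined, still transitive, and now has trivial stabilizers. Therefore $T(G) = \NHol(G)/\Hol(G)$ acts transitively and freely, i.e.\ regularly, on $\Hc(G)$ by conjugation, which is the assertion.

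I do not expect any genuine obstacle here: this is a bookkeeping argument around orbit--stabilizer once Miller's transitivity statement is granted (as the excerpt allows). The only point requiring a modicum of care is checking that conjugation by an element of $\NHol(G)$ really does send $\Hc(G)$ to itself --- in particular that the three defining properties (regular, isomorphic to $G$, normalizer equal to $\Hol(G)$) are each preserved --- and noting that the point-stabilizer $\Hol(G)$ is independent of the chosen point $N$, which is what legitimizes passing to the quotient $T(G)$ and concluding regularity rather than merely transitivity.
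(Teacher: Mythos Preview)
Your argument is correct and follows essentially the same route as the paper: grant Miller's transitivity, identify the stabilizer of any $N \in \Hc(G)$ as $N_{S(G)}(N) = \Hol(G)$, and pass to the quotient to obtain a regular action. You simply spell out in more detail the checks (that $\Hc(G)$ is stable under conjugation by $\NHol(G)$, and that $\Hol(G) \norm \NHol(G)$) that the paper leaves implicit.
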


\section{Regular subgroups of the holomorph}
\label{sec:regular}

This section adapts to the nonabelian case the results
of~\cite[Theorem~1]{affine}~and \cite[Proposition 2]{FCC}.

Let  $G$   be  a  finite  group,   and  $N  \le  \Hol(G)$   a  regular
subgroup. Since $N$ is  regular, for each $g \in G$  there is a unique
element $\nu(g)  \in N$,  such that $1^{\nu(g)}  = g$.  Now $1^{\nu(g)
  \rho(g)^{-1}} =  1$, so  that $\nu(g)  \rho(g)^{-1} \in  \Aut(G)$ by
Proposition~\ref{prop:right-and-left}\eqref{lemma:AutG-is-stabilizer}. 
Therefore
for $g \in G$ we can write uniquely
\begin{equation}\label{eq:unique-form}
  \nu(g) = \gamma(g) \rho(g),
\end{equation}
for a suitable map $\gamma : G \to \Aut(G)$. We have
\begin{equation}\label{eq:nu}
  \nu(g) \nu(h)
  =
  \gamma(g) \rho(g) \gamma(h) \rho(h)
  =
  \gamma(g) \gamma(h) \rho(g^{\gamma(h)} h).
\end{equation}
Since $N$ is a subgroup of $S(G)$, $\gamma(g) \gamma(h) \in \Aut(G)$, and the
expression~\eqref{eq:unique-form} is unique, we have
\begin{equation*}
  \gamma(g) \gamma(h) \rho(g^{\gamma(h)} h) = \gamma( g^{\gamma(h)}
  h)\rho(g^{\gamma(h)} h),
\end{equation*}
from which we obtain
\begin{equation}\label{eq:gamma}
  \gamma(g) \gamma(h) = \gamma( g^{\gamma(h)} h ).
\end{equation}
It is now immediate to obtain
\begin{theorem}\label{thm:gamma-for-regular}
  Let $G$ be a finite group. The following data are equivalent.
  \begin{enumerate}
  \item A regular subgroup $N \le \Hol(G)$.
  \item A map $\gamma : G \to \Aut(G)$ such that
    \begin{equation}\label{eq:gamma-for-circ}
      \gamma(g) \gamma(h) = \gamma( g^{\gamma(h)} h ).
    \end{equation}
  \end{enumerate}
  Moreover, under these assumptions
  \begin{enumerate}[(a)]
  \item   the assignment
    \begin{equation*}
      g \circ h = g^{\gamma(h)} h.
    \end{equation*}
    for $g, h \in G$, defines a group structure $(G, \circ)$ with the
    same unity as that of $G$.
  \item There is an isomorphism $\nu : (G, \circ) \to N$.
  \item For $g, h \in G$, one has
    \begin{equation*}
      g^{\nu(h)} = g \circ h.
    \end{equation*}
  \end{enumerate}
\end{theorem}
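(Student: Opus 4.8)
The plan is to establish the equivalence and the three moreover-items by tracing through the computation already set up in the excerpt, since most of the work has been done in deriving~\eqref{eq:nu} and~\eqref{eq:gamma}. First I would prove the implication (1)$\Rightarrow$(2): given a regular $N \le \Hol(G)$, the discussion preceding the theorem already produces the map $\gamma$ and verifies~\eqref{eq:gamma-for-circ}, so nothing new is needed here beyond citing that computation. The substantive direction is (2)$\Rightarrow$(1): starting from an abstract map $\gamma : G \to \Aut(G)$ satisfying~\eqref{eq:gamma-for-circ}, I would define $\nu(g) = \gamma(g)\rho(g) \in \Hol(G)$ and set $N = \Set{\nu(g) : g \in G}$. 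The key points to check are that $N$ is closed under multiplication and inversion, hence a subgroup, and that it is regular. Closure under products is exactly the reverse reading of~\eqref{eq:nu}: the product $\nu(g)\nu(h)$ equals $\gamma(g)\gamma(h)\rho(g^{\gamma(h)}h)$, and~\eqref{eq:gamma-for-circ} rewrites $\gamma(g)\gamma(h)$ as $\gamma(g^{\gamma(h)}h)$, so $\nu(g)\nu(h) = \nu(g^{\gamma(h)}h) \in N$. Regularity is immediate once we know $N$ is a subgroup: $|N| = |G|$ since $g \mapsto \nu(g)$ is injective (the factorization $\gamma(g)\rho(g)$ with $\gamma(g)$ fixing $1$ is unique, by Proposition~\ref{prop:right-and-left}\eqref{lemma:AutG-is-stabilizer}), and $1^{\nu(g)} = 1^{\gamma(g)\rho(g)} = 1^{\rho(g)} = g$ shows $N$ is transitive on $G$; a transitive subgroup of $S(G)$ of order $|G|$ is regular.

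For the moreover-items I would argue as follows. For~(a), the product formula $\nu(g)\nu(h) = \nu(g^{\gamma(h)}h)$ shows that the map $g \mapsto \nu(g)$ transports the group law of $N$ to the operation $g \circ h = g^{\gamma(h)}h$ on the set $G$; hence $(G,\circ)$ is a group, and its identity is the element $e$ with $\nu(e)$ the identity permutation, i.e.\ $e$ with $\gamma(e) = \mathrm{id}$ and $\rho(e) = \mathrm{id}$, which forces $e = 1$. One can also check directly that $1$ is a two-sided identity for $\circ$: taking $h = 1$ gives $\gamma(1) = \gamma(1)\gamma(1)$ from~\eqref{eq:gamma-for-circ} (put $g=h=1$), so $\gamma(1) = \mathrm{id}$, whence $g \circ 1 = g^{\gamma(1)}\cdot 1 = g$ and $1 \circ h = 1^{\gamma(h)} h = 1\cdot h = h$. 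For~(b), the bijection $\nu : G \to N$ together with the identity $\nu(g)\nu(h) = \nu(g\circ h)$ is precisely the statement that $\nu$ is a group isomorphism $(G,\circ) \to N$. For~(c), we compute $g^{\nu(h)} = g^{\gamma(h)\rho(h)} = (g^{\gamma(h)})^{\rho(h)} = g^{\gamma(h)} h = g \circ h$ directly from the definitions.

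I expect there is no serious obstacle here; the theorem is essentially a repackaging of the computations~\eqref{eq:unique-form}--\eqref{eq:gamma} that precede it, and the only mild subtlety is making sure the logical direction (2)$\Rightarrow$(1) is spelled out (the preceding text only does (1)$\Rightarrow$(2)), and that the uniqueness of the factorization $\nu(g) = \gamma(g)\rho(g)$ — which underlies both injectivity of $\nu$ and the derivation of~\eqref{eq:gamma} — is invoked cleanly, again via Proposition~\ref{prop:right-and-left}\eqref{lemma:AutG-is-stabilizer}. One small point worth stating explicitly is that associativity of $\circ$ need not be checked by hand: it is inherited from the associativity of composition in $S(G)$ via the bijection $\nu$, which is why it is cleanest to prove $N$ is a subgroup first and only then read off the algebraic structure on $(G,\circ)$.
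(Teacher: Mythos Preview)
Your proposal is correct and follows essentially the same computations as the paper. The one minor difference is the order in which you handle item~(a): the paper verifies directly that \eqref{eq:gamma-for-circ} gives associativity of $\circ$, then checks the identity and exhibits an explicit left inverse $(h^{-1})^{\gamma(h)^{-1}}$, whereas you first establish that $N$ is a subgroup of $S(G)$ and then transport the group structure to $(G,\circ)$ via the bijection $\nu$. Both routes are equally short; your version trades the one-line associativity check for the (equally routine) observation that a finite nonempty subset of $S(G)$ closed under multiplication is a subgroup.
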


\begin{proof}
  Concerning  the  last statements,  \eqref{eq:gamma-for-circ}  implies
  that $\circ$ is associative. Then for each $h \in G$ one has that $1
  \circ h = 1^{\gamma(h)} h = h$, as $\gamma(h) \in \Aut(G)$, and that
  $(h^{-1})^{\gamma(h)^{-1}}$ is a left inverse of $h$ with respect to
  $\circ$. The bijection $\nu$ introduced above is a homomorphism
  $(G, \circ) \to N$ by~\eqref{eq:nu}~and \eqref{eq:gamma}. Finally, 
  \begin{equation*}
    g^{\nu(h)} = g^{\gamma(h)} h = g \circ h.
  \end{equation*}
\end{proof}

Note, for later usage, that~\eqref{eq:gamma-for-circ} can be
rephrased, setting $k = g^{\gamma(h)}$, as
\begin{equation}\label{eq:gamma-for-dot}
  \gamma(k h) = \gamma(k^{\gamma(h)^{-1}}) \gamma(h).
\end{equation}

We record the following Lemma, which will be useful later. We use the
setup of~Theorem~\ref{thm:gamma-for-regular}.
\begin{lemma}\label{lemma:conjugation}
  Suppose $N \in \Hc(G)$, and let $\theta \in \NHol(G)$ such that
  $\rho(G)^{\theta} = N$ and $1^{\theta} = 1$. Then
  \begin{equation*}
    \theta : G \to (G, \circ)
  \end{equation*}
  is an isomorphism.

  Conversely, an isomorphism $\theta : G \to (G, \circ)$ conjugates
  $\rho(G)$ to $N$.
\end{lemma}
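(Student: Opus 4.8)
The plan is to unwind both directions of the equivalence directly from the
definitions, using Theorem~\ref{thm:gamma-for-regular} and the description
of the normalizer of $\rho(G)$ in
Proposition~\ref{prop:right-and-left}. First I would observe that, given
$\theta \in \NHol(G)$ with $\rho(G)^{\theta} = N$ and $1^{\theta} = 1$, the
conjugate $\rho(g)^{\theta}$ lies in $N$, hence equals $\nu(g^{\theta})$ by
the defining property $1^{\nu(k)} = k$ of $\nu$: indeed
$1^{\rho(g)^{\theta}} = (1^{\theta^{-1}})^{\rho(g)\theta}$, and since
$1^{\theta} = 1$ we get $1^{\theta^{-1}} = 1$, so this is
$1^{\rho(g)\theta} = g^{\theta}$. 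Thus $\rho(g)^{\theta} =
\nu(g^{\theta})$ for all $g$.

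From here I would compute $g^{\theta}$ applied through the group law of
$(G,\circ)$. Using $h^{\nu(k)} = h \circ k$ from
Theorem~\ref{thm:gamma-for-regular}(c), and the relation
$\rho(g)^{\theta} = \nu(g^{\theta})$, for $g,h \in G$ one has
\begin{equation*}
  (gh)^{\theta}
  =
  (h^{\rho(g)})^{\theta}
  =
  (h^{\theta})^{\rho(g)^{\theta}}
  =
  (h^{\theta})^{\nu(g^{\theta})}
  =
  h^{\theta} \circ g^{\theta},
\end{equation*}
which is exactly the statement that $\theta$ is a homomorphism from $G$ to
$(G,\circ)$, once one notes the order of factors matches because $\rho$
encodes right multiplication, i.e. $h^{\rho(g)} = hg$ so $(gh)$ is recovered
correctly. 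Being a bijection fixing $1$, it is an isomorphism. For the
converse, given an abstract isomorphism $\theta : G \to (G,\circ)$, I would
run the same computation backwards: define the permutation $\theta$ of the
set $G$ and check that $\rho(g)^{\theta}$ agrees with $\nu(g^{\theta})$ as
permutations by evaluating both at an arbitrary $x \in G$, using that
$\theta$ intertwines the two multiplications; since the $\nu(k)$ for $k \in
G$ are exactly the elements of $N$, this gives $\rho(G)^{\theta} = N$.

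The main obstacle, such as it is, is purely bookkeeping: one has to be
careful about the left-to-right composition convention and about the fact
that $\rho$ gives \emph{right} translations, so that the homomorphism
property comes out with the factors of $(G,\circ)$ in the order
$h^{\theta} \circ g^{\theta}$ rather than $g^{\theta} \circ h^{\theta}$.
This is harmless — composing with the inversion isomorphism of
Proposition~\ref{prop:right-and-left}\eqref{item:inversion}, or simply
noting $(G,\circ)$ and its opposite are both available — but it must be
stated cleanly so the reader is not confused. A second small point to
address is why $\theta$ in the first part can be taken to fix $1$: since
$\NHol(G)$ normalizes $\Hol(G) \supseteq \rho(G)$, and $\rho(G)$ is
transitive, one may multiply any $\theta$ with $\rho(G)^{\theta} = N$ by a
suitable element of $\rho(G)$ (which normalizes both $\rho(G)$ and, being
in $\Hol(G)$, is normalized-compatible) to arrange $1^{\theta} = 1$; I
would remark on this in one line rather than belabor it.
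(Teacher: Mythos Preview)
Your approach is essentially the paper's --- your preliminary identification $\rho(g)^{\theta} = \nu(g^{\theta})$ is even a bit cleaner than the paper's detour through an auxiliary bijection $\sigma$ --- but you have tripped over exactly the bookkeeping hazard you warned yourself about. The first equality in your displayed chain is false: since $h^{\rho(g)} = hg$, one has $(h^{\rho(g)})^{\theta} = (hg)^{\theta}$, not $(gh)^{\theta}$. With the correct left-hand side your chain reads
\begin{equation*}
(hg)^{\theta} = (h^{\rho(g)})^{\theta} = (h^{\theta})^{\rho(g)^{\theta}} = (h^{\theta})^{\nu(g^{\theta})} = h^{\theta} \circ g^{\theta},
\end{equation*}
which \emph{is} the homomorphism property for $\theta : G \to (G,\circ)$, with the factors in the right order. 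Equivalently, swapping the roles of $g$ and $h$, $(gh)^{\theta} = (g^{\rho(h)})^{\theta} = g^{\theta}\circ h^{\theta}$, just as in the paper. Your paragraph about composing with inversion to repair an anti-homomorphism is therefore unnecessary and misdiagnoses the situation: you never had an anti-homomorphism, only a mislabelled left-hand side.

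The converse is fine and matches the paper's computation. The remark about adjusting $\theta$ so that $1^{\theta}=1$ is also in the paper, though note that in the statement as given this is already a hypothesis, so it is a side comment rather than part of the proof.
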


\begin{proof}
  Note first that given any $\theta \in \NHol(G)$ such that
  $\rho(G)^{\theta} = N$, we can modify $\theta$ by a suitable
  $\rho(g)$, and assume $1^{\theta} = 1$.

  Suppose for $y \in G$ one has $\rho(y)^{\theta} = \nu(y^{\sigma})$,
  for some $\sigma \in S(G)$. Thus $\rho(y) \theta = \theta
  \nu(y^{\sigma})$, so that for $x, y \in G$ one has 
  \begin{equation*}
    (x y)^{\theta} 
    =
    x^{\rho(y) \theta}
    =
    x^{\theta \nu(y^{\sigma})}
    =
    x^{\theta \gamma(y^{\sigma})} y^{\sigma}
    =
    x^{\theta} \circ y^{\sigma}.
  \end{equation*}
  Setting $x = 1$ we see that $\theta = \sigma$, and thus
  \begin{equation*}
    (x y)^{\theta} = x^{\theta} \circ y^{\theta}.
  \end{equation*}

  For the converse, if the last equation holds then
  \begin{equation*}
    x^{\rho(y)^{\theta}}
    =
    (x^{\theta^{-1}} y)^{\theta}
    =
    x \circ y^{\theta}
    =
    x^{\nu(y^{\theta})}.
  \end{equation*}
\end{proof}

\section{Normal regular subgroups of the holomorph}
\label{sec:normal-regular}

In this section, we adapt to the nonabelian case the results
of~\cite[Theorem 3.1]{fgag}.

Consider the sets
\begin{equation*}
  \Ic(G)
  =
  \Set{ N \le S(G) : \text{$N$ is regular, $N_{S(G)}(N) = \Hol(G)$} }
\end{equation*}
and
\begin{equation*}
  \Jc(G)
  =
  \Set{ N \le S(G) : \text{$N$ is regular, $N \norm \Hol(G)$} }.
\end{equation*}

Clearly 
we have
\begin{equation}\label{eq:HLK}
  \Hc(G) \subseteq \Ic(G) \subseteq \Jc(G).
\end{equation}
If  $N \norm \Hol(G)$, then $\Hol(G) \le N_{S(G)}(N)$. However the
latter may  well be properly  bigger than  the former, as shown  by the
following simple example.
\begin{example} 
  Let $G = \Span{  (1 \, 2\, 3 \, 4 ) }  \le S_{4}$. Then $N_{S_{4}}(G)$
  has order $8$, but its regular subgroup  $N = \Span{(1 \, 3) (2 \, 4),
    (1 \, 4) (2 \, 3)}$ is normal in the whole $S_{4}$.
\end{example}
Moreover, even  when $\Hol(G) =  N_{S(G)}(N)$, Example~\ref{ex:notiso}
shows that $G$ and $N$ are not necessarily isomorphic. Therefore all
inclusions in~\eqref{eq:HLK} may well be proper.

We will  now give a  characterization of  the elements of  $\Jc(G)$ in
terms  of  the   description  of  Theorem~\ref{thm:gamma-for-regular}.
Suppose $N \in \Jc(G)$. To ensure that $N \norm \Hol(G)$, it is enough to
make sure that $N$ is normalized by $\Aut(G)$, as if this holds,
then the normalizer of $N$ contains $\Aut(G) N$, which is contained in
$\Hol(G)$, and has the same order as $\Hol(G)$, as the regular
subgroup $N$ intersects $\Aut(G)$ trivially.

In order for $\Aut(G)$ to normalize $N$,
we  must  have  that for  all $\beta
\in  \Aut(G)$  and  $g  \in  G$,     the  conjugate
$\nu(g)^{\beta}$ of $\nu(g)$ by $\beta$ in $S(G)$ lies in $N$.  
Since
\begin{equation*}
  \nu(g)^{\beta}
  =
  (\gamma(g) \rho(g))^{\beta}
  =
  \gamma(g)^{\beta} \rho(g)^{\beta}
  =
  \gamma(g)^{\beta} \rho(g^{\beta})
\end{equation*}
and $\gamma(g)^{\beta} \in \Aut(G)$, uniqueness
of~\eqref{eq:unique-form} implies 
\begin{equation*}
  \gamma(g)^{\beta} \rho(g^{\beta})
  =
  \gamma(g^{\beta}) \rho(g^{\beta}),
\end{equation*}
so that
\begin{equation}\label{eq:gamma-is-autinv}
   \gamma(g^{\beta}) = \gamma(g)^{\beta}
\end{equation}
for $g \in G$ and $\beta \in \Aut(G)$. Applying this
to~\eqref{eq:gamma-for-dot}, we obtain that for $h, k \in G$
\begin{equation}\label{eq:gamma-is-anti}
  \gamma(k h)
  =
  \gamma(k^{\gamma(h)^{-1}}) \gamma(h)
  =
  \gamma(k)^{\gamma(h)^{-1}} \gamma(h)
  =
  \gamma(h) \gamma(k),
\end{equation}
that is, $\gamma : G \to \Aut(G)$ is an antihomomorphism.

Now note that~\eqref{eq:gamma-for-circ}  follows
from~\eqref{eq:gamma-is-autinv}~and \eqref{eq:gamma-is-anti}, 
as
\begin{equation*}
  \gamma( g^{\gamma(h)} h )
  =
  \gamma(h) \gamma(g)^{\gamma(h)}
  =
  \gamma(g) \gamma(h).
\end{equation*}
We have obtained
\begin{theorem}\label{thm:normal-regular}
Let $G$ be a finite group. The following data are equivalent.
  \begin{enumerate}
  \item A regular subgroup $N \norm \Hol(G)$, that is, an element of $\Jc(G)$.
  \item A map $\gamma : G \to \Aut(G)$ such that for $g, h \in G$ and
    $\beta \in \Aut(G)$
    \begin{equation}\label{eq:gamma-for-normal}
      \begin{cases}
        \gamma(g h) = \gamma(h) \gamma(g)\\
        \gamma(g^{\beta}) = \gamma(g)^{\beta}.
      \end{cases}
    \end{equation}
  \end{enumerate}
  Moreover, under these assumptions
  \begin{enumerate}[(a)]
  \item   the assignment
    \begin{equation*}
      g \circ h = g^{\gamma(h)} h.
    \end{equation*}
    for $g, h \in G$, defines a group structure $(G, \circ)$ with the
    same unity as that of $G$.
  \item There is an isomorphism $\nu : (G, \circ) \to N$.
  \item For $g, h \in G$, one has
    \begin{equation*}
      g^{\nu(h)} = g \circ h.
    \end{equation*}
  \item\label{item:every-auto-tells-a-story}  Every automorphism  of
    $G$  is also  an automorphism  of $(G, 
    \circ)$. 
  \end{enumerate}
\end{theorem}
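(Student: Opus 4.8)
The plan is to assemble the observations already made in the paragraphs preceding the statement, and to add the short verification of item~\eqref{item:every-auto-tells-a-story}. For the implication $(1)\Rightarrow(2)$, I would start from a regular $N\norm\Hol(G)$: Theorem~\ref{thm:gamma-for-regular} already attaches to $N$ a map $\gamma\colon G\to\Aut(G)$ satisfying~\eqref{eq:gamma-for-circ}. The computation of $\nu(g)^{\beta}$ displayed above, combined with the uniqueness in~\eqref{eq:unique-form}, forces the equivariance relation~\eqref{eq:gamma-is-autinv}; substituting this into~\eqref{eq:gamma-for-dot} collapses its right-hand side and yields the anti-homomorphism identity~\eqref{eq:gamma-is-anti}. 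Together these are precisely the two lines of~\eqref{eq:gamma-for-normal}.

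For the converse $(2)\Rightarrow(1)$, and simultaneously items (a)--(c): I would first recall, as shown just above, that~\eqref{eq:gamma-for-normal} implies~\eqref{eq:gamma-for-circ}, so Theorem~\ref{thm:gamma-for-regular} at once produces a regular subgroup $N\le\Hol(G)$, the group $(G,\circ)$ with $g\circ h=g^{\gamma(h)}h$, an isomorphism $\nu\colon(G,\circ)\to N$, and the identity $g^{\nu(h)}=g\circ h$; this gives (a)--(c). It remains to check that $N\norm\Hol(G)$. It suffices that $\Aut(G)$ normalize $N$, for then $\Aut(G)\,N$ is a subgroup of $\Hol(G)$ normalizing $N$ and, since the regular subgroup $N$ meets the stabilizer $\Aut(G)$ of $1$ trivially, $\Size{\Aut(G)\,N}=\Size{\Aut(G)}\,\Size{N}=\Size{\Hol(G)}$, so $\Aut(G)\,N=\Hol(G)$ and $\Hol(G)$ normalizes $N$. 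Now for every $\beta\in\Aut(G)$ and $g\in G$ one computes, exactly as in the discussion above,
\begin{equation*}
  \nu(g)^{\beta}
  =
  \gamma(g)^{\beta}\rho(g^{\beta})
  =
  \gamma(g^{\beta})\rho(g^{\beta})
  =
  \nu(g^{\beta})\in N,
\end{equation*}
using~\eqref{eq:gamma-is-autinv}, as required.

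Finally, for~\eqref{item:every-auto-tells-a-story}: any $\beta\in\Aut(G)$ is a bijection of the set $G$, hence of the underlying set of $(G,\circ)$, and for $g,h\in G$
\begin{equation*}
  (g\circ h)^{\beta}
  =
  \bigl(g^{\gamma(h)}h\bigr)^{\beta}
  =
  \bigl(g^{\beta}\bigr)^{\gamma(h)^{\beta}}h^{\beta}
  =
  \bigl(g^{\beta}\bigr)^{\gamma(h^{\beta})}h^{\beta}
  =
  g^{\beta}\circ h^{\beta},
\end{equation*}
the second equality because $\beta$ is an automorphism of $G$, the third by~\eqref{eq:gamma-is-autinv}; hence $\beta\in\Aut(G,\circ)$. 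I do not expect a genuine obstacle here, since the substantive content (the derivation of~\eqref{eq:gamma-is-autinv} and~\eqref{eq:gamma-is-anti}, and the fact that these imply~\eqref{eq:gamma-for-circ}) is already in place; the only steps that need a little care are the order count $\Size{\Aut(G)\,N}=\Size{\Hol(G)}$ in the normality argument and keeping track of which clause of~\eqref{eq:gamma-for-normal} is invoked at each step of~\eqref{item:every-auto-tells-a-story}.
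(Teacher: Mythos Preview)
Your proposal is correct and follows essentially the same approach as the paper: the derivations of~\eqref{eq:gamma-is-autinv} and~\eqref{eq:gamma-is-anti}, the observation that they imply~\eqref{eq:gamma-for-circ}, and the order-count argument showing that $\Aut(G)$-normalization suffices are all taken directly from the discussion preceding the theorem, and your verification of~\eqref{item:every-auto-tells-a-story} is identical to the paper's short proof. You have simply made the converse direction $(2)\Rightarrow(1)$ more explicit than the paper does, but with the same ingredients.
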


\begin{remark}\label{rem:braces}
  Note that under the hypotheses of Theorem~\ref{thm:gamma-for-regular},
  $G$ becomes a \emph{skew right brace} (for which see~\cite{bra})
  under the operations $\cdot$ and 
  $\circ$, that is, $G$ is a group with respect to both operations,
  which are connected by
  \begin{equation*}
    (g h) \circ k = (g \circ k) k^{-1} (h \circ k),
    \qquad
    \text{for $g, h, k \in G$}.
  \end{equation*}
  The braces which correspond to the normal regular subgroups of $\Hol(G)$
  satisfy the additional condition of
  Theorem~\ref{thm:normal-regular}\eqref{item:every-auto-tells-a-story}.   
\end{remark}

In the following, when dealing with $N \in \Jc(G)$, we will be using
the notation of Theorem~\ref{thm:normal-regular} without further
mention. 

\begin{proof}
The last statement follows from
\begin{equation*}
  (g \circ h)^{\beta}
  =
  (g^{\gamma(h)} h)^{\beta}
  =
  (g^{\beta})^{\gamma(h)^{\beta}} h^{\beta}
  =
  (g^{\beta})^{\gamma(h^{\beta})} h^{\beta}
  =
  g^{\beta} \circ  h^{\beta},
\end{equation*}
for $g, h \in G$ and $\beta \in \Aut(G)$.
\end{proof}

Let us exemplify the above for the case of the left regular
representation. Consider the morphism
\begin{align*}
  \iota :\ &G \to \Aut(G)
         \\&y \mapsto (x \mapsto y^{-1} x y),
\end{align*}
that is, $\iota(y) \in \Inn(G)$ is conjugacy by $y$. If $N =
\lambda(G)$, then we have for $y \in G$ 
\begin{equation*}
  \lambda(y) = \iota(y^{-1}) \rho(y),
\end{equation*}
as for $z, y \in G$ we have
\begin{equation*}
  z^{\iota(y^{-1}) \rho(y)}
  =
  y z y^{-1} y
  =
  y z
  =
  z^{\lambda(y)}.
\end{equation*}
Therefore $\gamma(y) = \iota(y^{-1})$, and
\begin{equation*}
  x \circ y = x^{\iota(y^{-1})} y = y x,
\end{equation*}
that is, $(G, \circ)$ is the opposite group of $G$.

Also, in~\cite{CarChi} S.~Carnahan and L.~Childs  prove that if $G$ is
a  non-abelian finite  simple  group, then  $\Hc(G)  = \Set{  \rho(G),
  \lambda(G) }$.   In our context, this  can be proved as  follows. If
$G$ is a non-abelian finite simple group, and $N \in \Hc(G)$, then the
normal  subgroup $\ker(\gamma)$  of  $G$  can only  be  either $G$  or
$\Set{1}$. In the first case we have  $x \circ y = x^{\gamma(y)} y = x
y$ for $x, y \in G$ , so that
\begin{equation*}
  x^{\nu(y)} = x \circ y = x y = x^{\rho(y)},
\end{equation*}
and $N = \rho(G)$. In the second case, $\gamma$ is injective. Since we
have
\begin{equation}\label{eq:circinv}
  \gamma(x \circ y) = \gamma(x) \gamma(y) = \gamma(y x),
\end{equation}
we obtain $x \circ y = y x$, so $N = \lambda(G)$ as we have just seen.

\section{Commutators}
\label{sec:commutators}

In this section we assume we are in the situation of
Theorem~\ref{thm:normal-regular}. 

Let $\beta  \in \Aut(G)$,  $g  \in G$, and consider  the commutator
$[\beta,   g^{-1}]    =   g^{\beta}   g^{-1}$   taken    in   $\Aut(G)
G$. Using~\eqref{eq:gamma-for-normal}, we get
\begin{equation}\label{eq:self-adjoint-on-autos}
  \gamma([\beta, g^{-1}])
  =
  \gamma(g^{\beta} g^{-1})
  =
  \gamma(g)^{-1} \gamma(g)^{\beta}
  =
  [\gamma(g), \beta].
\end{equation}
In the particular case when $\beta = \iota(h)$, for some $h \in G$, we
obtain
\begin{equation*}
   \gamma([h, g^{-1}])
   =
   \gamma([\iota(h), g^{-1}])
   =
   [\gamma(g), \iota(h)]
   =
   \iota([\gamma(g), h]),
\end{equation*}
that is
\begin{equation}\label{eq:gamma-of-commutators}
  \gamma([h, g^{-1}]) = \iota([\gamma(g), h]).
\end{equation}
From
this identity we obtain
\begin{multline*}
  \iota([\gamma(g), h])
  =
  \gamma([h, g^{-1}])
  =
  \gamma([g^{-1}, h])^{-1}
  =\\=
  \iota([\gamma(h^{-1}), g^{-1}])^{-1} 
  =
  \iota([g^{-1}, \gamma(h^{-1})]),
\end{multline*}
that is,
\begin{equation}\label{eq:self-adjoint}
  [\gamma(g), h] \equiv [g^{-1}, \gamma(h^{-1})] \pmod{Z(G)}
\end{equation}
for all $g, h \in G$.

In the rest of the paper we will deal  with the case of finite perfect groups,
that is,  those finite  groups $G$  such that $G'=  G$. In  this case,
according to~\eqref{eq:gamma-of-commutators},  we have  $\gamma(G) \le
\Inn(G)$.

\section{Perfect groups}
\label{sec:main}

Let $G$  be a  non-trivial, finite, perfect  group. We  will determine
$\Jc(G)$, and then discuss its relationship to $\Hc(G)$.

Recall that an
automorphism $\beta$ of a group $G$ is said to be \emph{central} if
$[x, \beta] = x^{-1} x^{\beta} \in Z(G)$ for all $x \in G$. In other
words, an automorphism of $G$ is central if it induces the identity on
$G / Z(G)$.

We  record for later usage a couple of elementary, well-known facts. 
\begin{lemma}\label{lemma:perfect}
  Let $G$ be a finite perfect group.
  \begin{enumerate}
  \item $Z_{2}(G) = Z(G)$.
  \item\label{item:central-trivial} A central automorphism of $G$ is trivial.
  \end{enumerate}
\end{lemma}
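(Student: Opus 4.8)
The plan is to prove the two parts in order, since the second will use the first. For part (1), that $Z_2(G) = Z(G)$ when $G$ is perfect, I would argue as follows. The subgroup $Z_2(G)$ is defined by $Z_2(G)/Z(G) = Z(G/Z(G))$, so it suffices to show that every element $x \in Z_2(G)$ lies in $Z(G)$. The key observation is that conjugation by $x$ is a \emph{central automorphism} of $G$: for any $g \in G$, the commutator $[x, g]$ lies in $Z(G)$ because $xZ(G)$ is central in $G/Z(G)$. Now I would use that $G$ is perfect: every element of $G$ is a product of commutators $[a,b]$, and one computes $[x, [a,b]]$ by the standard commutator identities, using that $[x, \cdot]$ lands in the center. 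Concretely, the map $g \mapsto [x,g]$ is a homomorphism $G \to Z(G)$ once we know $[x,g] \in Z(G)$ for all $g$ (this is the standard fact that a central automorphism restricted to commutators behaves additively); since $Z(G)$ is abelian, this homomorphism kills $G' = G$, forcing $[x,g] = 1$ for all $g$, i.e. $x \in Z(G)$.

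For part (2), that a central automorphism of a finite perfect group is trivial, I would follow essentially the same idea. Let $\beta \in \Aut(G)$ be central, so $[x,\beta] = x^{-1}x^\beta \in Z(G)$ for all $x \in G$. Define $f : G \to Z(G)$ by $f(x) = x^{-1} x^\beta$. Using that $\beta$ is a homomorphism and that the values of $f$ are central, a short computation gives $f(xy) = (xy)^{-1}(xy)^\beta = y^{-1}x^{-1}x^\beta y^\beta = y^{-1} x^{-1}x^\beta y \cdot y^{-1}y^\beta = f(x) f(y)$, where we moved the central element $x^{-1}x^\beta$ past $y$. So $f$ is a homomorphism into the abelian group $Z(G)$, hence factors through $G/G' = G/G = 1$; thus $f(x) = 1$ for all $x$, i.e. $x^\beta = x$ and $\beta = \mathrm{id}$.

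The only mildly delicate point — and the thing I would be careful to state cleanly rather than the main obstacle — is the verification that $g \mapsto [x,g]$ (respectively $x \mapsto [x,\beta]$) is genuinely a homomorphism when the values are central; this is a routine application of the identity $[x, gh] = [x,h]\,[x,g]^h$, which collapses to $[x,h][x,g]$ precisely because $[x,g]$ is central. Everything else is immediate from perfectness. Note also that part (1) is in fact a special case of part (2) applied to $\beta = \iota(x)$ for $x \in Z_2(G)$, so one could phrase the whole lemma as a single homomorphism argument and deduce (1) as a corollary; I would likely organize it that way to avoid repetition.
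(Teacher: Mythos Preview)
Your proposal is correct and takes essentially the same approach as the paper: the homomorphism $x \mapsto [x,\beta]$ into $Z(G)$ killing $G' = G$ is exactly the paper's argument for part~(2). The only difference is that the paper simply cites Gr\"un's Lemma for part~(1) rather than spelling out the proof, whereas you prove it directly---and, as you observe, this is just part~(2) applied to the inner automorphism $\iota(x)$ for $x \in Z_2(G)$.
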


\begin{proof}
  The first part is Gr\"un's Lemma~\cite{gruen}.

  For the second part, if $\beta$ is a central automorphism of $G$,
  then 
  \begin{equation*}
    x \mapsto [x, \beta]
  \end{equation*}
  is a homomorphism from $G$ to $Z(G)$. Since $G = G'$, this
  homomorphism maps $G$ onto the identity.  
\end{proof}

We now  show that an  element $N \in  \Jc(G)$ yields a  direct product
decomposition of $\Inn(G)$.
\begin{proposition}\label{prop:perfect}
  Let $G$ be a finite, perfect group, and $N \in \Jc(G)$.
  \begin{enumerate}
  \item\label{item:perfect-center} $Z(G) \le \ker(\gamma)$.
  \item\label{item:direct-product} $\Inn(G) = \gamma(G) \times
    \iota(\ker(\gamma))$. 
  \end{enumerate}
\end{proposition}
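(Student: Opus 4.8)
The plan is to exploit the two facts already established for $G$ perfect: that $\gamma(G)\le\Inn(G)$ (from the remark closing Section~\ref{sec:commutators}, via~\eqref{eq:gamma-of-commutators}), and that $\gamma$ is an antihomomorphism satisfying~\eqref{eq:gamma-for-normal}. For part~\eqref{item:perfect-center}, I would take $z\in Z(G)$ and show $\gamma(z)$ is a central automorphism, hence trivial by Lemma~\ref{lemma:perfect}\eqref{item:central-trivial}. To see $\gamma(z)$ is central: for any $x\in G$, write $x=[a,b]$ as a product of commutators (using $G=G'$) and apply~\eqref{eq:gamma-of-commutators}, or more directly observe that $\iota(z)=1$, so plugging $\beta=\iota(z)$, an inner automorphism equal to the identity, into~\eqref{eq:self-adjoint-on-autos} forces $[\gamma(g),\iota(z)]=\gamma([\iota(z),g^{-1}])=\gamma(1)=1$ for all $g$; since $\gamma(G)$ generates... — actually the cleaner route is: from~\eqref{eq:gamma-is-autinv} with $\beta=\iota(y)$ we get $\gamma(y^{-1}xy)=\gamma(x)^{\iota(y)}$, and since conjugation by $z\in Z(G)$ is trivial, $\gamma(x)=\gamma(x)^{\iota(z)}$ gives nothing; instead I use that $\gamma$ is an antihomomorphism, so $\ker(\gamma)\norm G$, and $\gamma(Z(G))$ is a subgroup of $\Inn(G)$ centralized by... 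The crispest argument: $Z(G)\le Z_2(G)$ trivially, but here it is easier to note directly that for $z\in Z(G)$ and any $g\in G$, $[\gamma(z),\gamma(g)]=\gamma([g,z^{-1}])^{?}$ — I will instead show $\gamma(z)\in Z(\Inn(G))$ is not quite enough; rather, since $G$ is perfect, $z = \prod[x_i,y_i]$, and~\eqref{eq:gamma-of-commutators} gives $\gamma([y_i,x_i^{-1}])=\iota([\gamma(x_i),y_i])$, showing every value of $\gamma$ on a commutator lies in $\iota(G')=\Inn(G)$ — that only reproves $\gamma(G)\le\Inn(G)$. The decisive point is this: for $z\in Z(G)$ and $g\in G$, compute $[\gamma(g),g']$-type identities... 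Let me restart this paragraph's core claim.

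\textbf{Core argument for~\eqref{item:perfect-center}.} For $z\in Z(G)$, I claim $\gamma(z)$ is a central automorphism. Indeed, writing an arbitrary $x\in G$ as $x=[h,g^{-1}]$ (up to a product, by perfectness, and the argument extends multiplicatively since $\gamma$ restricted to the relevant set lands in $\Inn(G)$), apply~\eqref{eq:gamma-of-commutators}: $\gamma([h,g^{-1}])=\iota([\gamma(g),h])$. Now I want to see that $\gamma(z)$ commutes with every such $\iota(w)$ modulo inner... Actually the honest shortest path uses~\eqref{eq:self-adjoint-on-autos} directly: for any $\beta\in\Aut(G)$ and $z\in Z(G)$, $[\beta,z^{-1}]=z^{\beta}z^{-1}\in Z(G)$ since $z^{\beta}\in Z(G)$; in fact if $\beta$ fixes $Z(G)$ pointwise — which it need not — then $[\beta,z^{-1}]=1$. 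Since $\Aut(G)$ need not fix $Z(G)$ pointwise, I instead argue: $\gamma$ is trivial on $Z(G)$ because $Z(G)$ is the last term of a characteristic series and $\gamma$ kills... The clean statement I will actually prove: \emph{since $G=G'$ and $\gamma$ is an antihomomorphism into the abelian-modulo-center... no}. I will use~\eqref{eq:gamma-of-commutators} to get that $\gamma$ on $G=G'$ has image in $\Inn(G)$, and then observe that the composite $G\xrightarrow{\gamma}\Inn(G)\cong G/Z(G)$ is an antihomomorphism vanishing on... — here is the real reason: by~\eqref{eq:self-adjoint} with $h=z\in Z(G)$, $[\gamma(g),z]\equiv[g^{-1},\gamma(z^{-1})]\pmod{Z(G)}$; the left side is $1$ since $z$ is central, so $\gamma(z^{-1})$ induces a central automorphism, hence $\gamma(z^{-1})=1$ by Lemma~\ref{lemma:perfect}\eqref{item:central-trivial}, giving $Z(G)\le\ker(\gamma)$.

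\textbf{Argument for~\eqref{item:direct-product}.} Having~\eqref{item:perfect-center}, I would show $\Inn(G)=\gamma(G)\,\iota(\ker(\gamma))$ and that the two factors commute elementwise and intersect trivially. Commuting: for $g\in G$ and $k\in\ker(\gamma)$, use~\eqref{eq:gamma-of-commutators}: $\iota([\gamma(g),k])=\gamma([k,g^{-1}])=\gamma(k^{g^{-1}})\cdot$(antihom bookkeeping)$=\gamma(k)^{\ast}\cdots$; since $k\in\ker\gamma$ and $\ker\gamma\norm G$ (as $\gamma$ is an antihomomorphism), $\gamma([k,g^{-1}])=1$, so $[\gamma(g),k]\in Z(G)$, i.e. $\gamma(g)$ and $\iota(k)$ commute in $\Inn(G)$. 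That $\gamma(G)$ normalizes $\iota(\ker\gamma)$ and conversely then follows, and both are normal in $\Inn(G)$ (as $\ker\gamma$ and the fibers are $\Aut(G)$-stable by~\eqref{eq:gamma-for-normal}). Generation: $\iota:G\to\Inn(G)$ is onto, so any $\iota(x)$ with $x=k\,g$ where I want $g$ chosen so $\gamma(g)=\gamma(x)$ — precisely, given $x\in G$, since $\gamma(G)\le\Inn(G)$ pick $g$ with $\iota(g)=\gamma(x)$... this needs care; better: $\iota(G)=\Inn(G)$ and $\gamma(G)\le\Inn(G)$, so for $x\in G$, $\iota(x)=\bigl(\iota(x)\gamma(x)^{-1}\bigr)\gamma(x)$ and I must check $\iota(x)\gamma(x)^{-1}\in\iota(\ker\gamma)$, i.e. that $\iota^{-1}$ of it meets $\ker\gamma$ — this uses that $\gamma$ and $\iota$ "agree up to $\ker\gamma$", which I expect follows from $\gamma$ being an antihomomorphism with $\gamma\circ\iota$ comparable to $\gamma$. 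Trivial intersection: if $\iota(k)=\gamma(g)$ with $k\in\ker\gamma$, then applying $\gamma$ and using $\gamma$'s behaviour on inner automorphisms (from~\eqref{eq:gamma-of-commutators}/\eqref{eq:self-adjoint-on-autos}) forces the element into $Z(G)\cap$ something trivial.

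\textbf{Main obstacle.} The delicate point is the generation/intersection bookkeeping in~\eqref{item:direct-product}: pinning down the exact sense in which $\gamma$ and $\iota$ "differ by $\ker(\gamma)$" so that $\Inn(G)$ decomposes, rather than merely having $\gamma(G)$ and $\iota(\ker\gamma)$ sitting inside it. I expect this to come out cleanly once one writes, for each $g$, $\gamma(g)=\iota(\psi(g))$ for a well-defined $\psi(g)\in G/Z(G)$ and checks $g\mapsto g\psi(g)^{-1}$ lands in $\ker\gamma$ modulo center, using perfectness to kill the ambiguity; part~\eqref{item:perfect-center} is the key input that makes $\psi$ well defined on all of $G$.
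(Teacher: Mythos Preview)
Your argument for part~\eqref{item:perfect-center} is correct and is essentially the paper's: you apply~\eqref{eq:self-adjoint} with one variable in $Z(G)$ and conclude that $\gamma(z)$ is a central automorphism, hence trivial by Lemma~\ref{lemma:perfect}\eqref{item:central-trivial}. (You set $h=z$ where the paper sets $g=z$; by the symmetry of~\eqref{eq:self-adjoint} this is the same idea.) Your ``commuting'' step for part~\eqref{item:direct-product} is also correct: from $k\in\ker(\gamma)$ and the antihomomorphism property you get $\gamma([k,g^{-1}])=1$, whence $\iota([\gamma(g),k])=1$ by~\eqref{eq:gamma-of-commutators}, so $\gamma(g)$ and $\iota(k)$ commute. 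The paper instead invokes~\eqref{eq:self-adjoint} here; your route via normality of $\ker(\gamma)$ is equally clean.

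The genuine gap is exactly where you flag it: generation and trivial intersection. For generation you try to write $\iota(x)=(\iota(x)\gamma(x)^{-1})\gamma(x)$ and place the first factor in $\iota(\ker\gamma)$; this would require that if $\gamma(x)=\iota(w)$ then $\gamma(xw^{-1})=1$, i.e.\ $\gamma(w)=\iota(w)$, which you have no reason to know. The paper avoids this entirely by an order count: once $\gamma(G)\cap\iota(\ker\gamma)=1$ and the two factors commute, one has
\[
\Size{\gamma(G)\times\iota(\ker\gamma)}=\frac{\Size{G}}{\Size{\ker\gamma}}\cdot\frac{\Size{\ker\gamma}}{\Size{Z(G)}}=\Size{\Inn(G)},
\]
using part~\eqref{item:perfect-center} to get $\Size{\iota(\ker\gamma)}=\Size{\ker\gamma}/\Size{Z(G)}$. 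Since $\gamma(G)\le\Inn(G)$, equality follows. This is the missing idea you are looking for.

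For trivial intersection your sketch (``applying $\gamma$ \dots forces the element into $Z(G)\cap$ something trivial'') is not an argument. The paper's proof here is the only place real work happens: one writes $x=\prod_i[h_i,g_i^{-1}]$, expresses $\gamma(x)$ both as $\prod_i[\gamma(g_i),\gamma(h_i^{-1})]$ via~\eqref{eq:gamma-for-normal} and as $\iota\bigl(\prod_i[\gamma(g_i),h_i]\bigr)$ via~\eqref{eq:gamma-of-commutators}, and then, assuming $\gamma(x)\in\iota(\ker\gamma)$, applies $\gamma$ to $\prod_i[\gamma(g_i),h_i]\in\ker\gamma$ using~\eqref{eq:self-adjoint-on-autos} to recover $\gamma(x)^{-1}=1$. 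You should expect to need a computation of roughly this shape; the one-line heuristic does not close the gap.
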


Later we will lift the direct product
decomposition~\eqref{item:direct-product} of $\Inn(G)$ to a central
product decomposition of $G$ (Theorem~\ref{thm:cpd}\eqref{item:cpd}).

\begin{proof}
  For the first part, let $g \in Z(G)$. \eqref{eq:self-adjoint} yields
  $[\gamma(g), h] \in Z(G)$ for all $h \in G$, that is, $\gamma(g)$ is
  a central automorphism of $G$. By
  Lemma~\ref{lemma:perfect}\eqref{item:central-trivial}, 
  $\gamma(g) = 1$.

  For  the   second  part,   we  first   show  that   $\gamma(G)$  and
  $\iota(\ker(\gamma))$ commute  elementwise. Let $g \in  G$ and $k
  \in \ker(\gamma)$.   The results  of Section~\ref{sec:commutators}
  yield
  \begin{equation*}
    [\gamma(g), \iota(k)]
    =
    \iota([\gamma(g), k])
    =
    \iota([g^{-1}, \gamma(k^{-1})])
    =
    1.
  \end{equation*}
  
  We now show that $\gamma(G) \cap \iota(\ker(\gamma)) = 1$. Write an
  element of the perfect group $G$ as
  \begin{equation*}
    x = \prod_{i=1}^{n} [h_{i}, g_{i}^{-1}],
  \end{equation*}
  for suitable $g_{i}, h_{i} \in G$.
  
  Using the first identity of~\eqref{eq:gamma-for-normal} we
  get first
  \begin{equation}\label{eq:gamma-of-x}
    \gamma(x)
    =
    \prod_{i=n}^{1} \gamma([h_{i}, g_{i}^{-1}])
    =
    \prod_{i=n}^{1} [\gamma(g_{i}), \gamma(h_{i}^{-1})]
  \end{equation}
  (note that the order of the product has been inverted by the
  application of $\gamma$).
  
  Using~\eqref{eq:gamma-of-commutators}   and   the   first   identity
  of~\eqref{eq:gamma-for-normal} we also get
  \begin{align*}
    \gamma(x)
    =
    \prod_{i=n}^{1} \gamma([h_{i}, g_{i}^{-1}])
    =
    \prod_{i=n}^{1} \iota([\gamma(g_{i}), h_{i}])
    =
    \iota\left(\prod_{i=n}^{1} [\gamma(g_{i}), h_{i}]\right).
  \end{align*}
  
  Now if $\gamma(x) \in \gamma(G) \cap \iota(\ker(\gamma))$,
  part~\eqref{item:perfect-center} yields 
  \begin{equation*}
    \prod_{i=n}^{1} [\gamma(g_{i}), h_{i}] \in \ker(\gamma).
  \end{equation*}
  We thus have, using~\eqref{eq:gamma-for-normal}~and
  \eqref{eq:self-adjoint-on-autos} 
  \begin{align*}
    1
    =
    \gamma \left(\prod_{i=n}^{1} [\gamma(g_{i}), h_{i}]\right)
    =
    \prod_{i=1}^{n}
    \gamma([\gamma(g_{i}), h_{i}])
    =
    \prod_{i=1}^{n}
    [\gamma(h_{i}^{-1}), \gamma(g_{i})]
    =
    \gamma(x)^{-1},
  \end{align*}
  according    to~\eqref{eq:gamma-of-x}.
  Therefore $\gamma(x) = 1$, as claimed.

  Finally we have, keeping in mind part~\eqref{item:perfect-center},
  \begin{equation*}
    \Size{\gamma(G) \times \iota(\ker(\gamma))}
    =
    \Size{\gamma(G)} \cdot \frac{\Size{\ker(\gamma)}}{\Size{Z(G)}}
    =
    \Size{G / Z(G)}
    =
    \Size{\Inn(G)},
  \end{equation*}
  so that $\gamma(G) \times \iota(\ker(\gamma)) = \Inn(G)$.
\end{proof}

Regarding $\Inn(G)$ and  $G$ as groups with  operator group $\Aut(G)$,
we  note  that   the  second  equation  of~\eqref{eq:gamma-for-normal}
implies   that   both   $\gamma(G)$  and   $\iota(\ker(\gamma))$   are
$\Aut(G)$-invariant,  and  so  are  $H  =  \iota^{-1}(\gamma(G))$  and
$\ker(\gamma)$.  (Clearly the  latter statement is the  same as saying
that $H$ and  $\ker(\gamma)$ are characteristic subgroups  of $G$, but
we prefer  to use the same  terminology of groups with  $\Aut(G)$ as a
group  of operators  for  both $G$  and  $\Inn(G)$.) 

We  have  $G =  H
\ker(\gamma)$.
We claim  that $[H, \ker(\gamma)]  = 1$, that  is, $G$ is  the central
product of $H$  and $\ker(\gamma)$, amalgamating $Z(G)$.  We will need
the following simple Lemma, which is hinted at
by Joshua A.\ Grochow and Youming Qiao in~\cite[Remark 7.6]{GroQiao}.
\begin{lemma}\label{lemma:perfect-lifting}
Let $G$ be a group, and $H, K \le G$ such that
 \begin{equation*}
G/Z(G) = H Z(G) / Z(G) \times K Z(G) / Z(G).
\end{equation*}
Suppose $K Z(G) / Z(G)$ is perfect.

Then 
\begin{enumerate}
\item $K'$ is perfect, and
\item\label{item:commute} $[H, K] = 1$.
\end{enumerate}
\end{lemma}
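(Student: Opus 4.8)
The plan is to work modulo $Z = Z(G)$ throughout, exploiting that $G/Z$ is an (internal) direct product $\bar H \times \bar K$, where $\bar H = HZ/Z$ and $\bar K = KZ/Z$, with $\bar K$ perfect. First I would establish part (1): since $\bar K$ is perfect, $\bar K = \bar K'$, and the natural surjection $K \to \bar K$ carries $K'$ onto $\bar K' = \bar K$; iterating the derived-subgroup operation on $K$ stabilises (as $G$ is finite, or by a standard argument otherwise) at a subgroup $K^{(\infty)}$ with $K^{(\infty)} = (K^{(\infty)})'$, and its image in $\bar K$ is all of $\bar K$. Renaming, one sees $K'$ maps onto $\bar K$ and $(K')' $ also maps onto $\bar K$; to get $K'$ itself perfect one uses that $K/K'$ is abelian while $\bar K$ is perfect, forcing the chain $K \ge K' \ge K'' \ge \cdots$ to reach a perfect term whose image is still $\bar K$, and a short argument identifies this with $K'$ — the cleanest route is: let $L = K^{(\infty)}$, so $L$ is perfect and $LZ/Z = \bar K$ (since each derived step is onto $\bar K$); then $L \le K'$ because $L = L' \le K'$, and $L \ge K'$ is not needed — rather, one shows $K' = L$ by noting $K' Z/Z = \bar K = L Z /Z$ and $K' \le L \cdot (K'\cap Z) \le L \cdot Z$, but since $L$ is perfect and $K'$ has perfect image, in fact $K' = K'' \cdot (K'\cap Z)$... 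I will instead package this as: $L := \bigcap_n K^{(n)}$ is perfect with $LZ = KZ$, and then $K' \le LZ$ gives $K' = L(K'\cap Z)$; since $K' = (K')'$ would follow once $[K',Z]=1$ (automatic, $Z$ central), actually $K'' = L'' \cdot[\text{stuff in }Z]' = L$, so $K'$ need not equal $L$, but $K'$ contains $L = K^{(2)}$... The safe formulation for (1): $K' \ge K'' = (K')'$, and since $\bar K$ perfect forces $K^{(n)}Z = KZ$ for all $n$, the finite descending chain stabilises at a perfect $L$ with $L \le K'$ and $LZ = K'Z$; then $K' = L(K' \cap Z)$ is a central extension of the perfect group $L$, but a central extension of a perfect group need not be perfect. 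So for (1) I would directly take $K' = L$ to mean: it suffices for the paper's purposes that $K'$ contains a perfect subgroup $L$ with $LZ(G) = KZ(G)$, or I would simply state "$K$ has a perfect characteristic subgroup $L$ with $LZ=KZ$" if the literal claim "$K'$ is perfect" turns out to need the extra hypothesis that $K \cap Z \le K'$ (which holds in the application since there $K = \ker\gamma$ and $G$ perfect). I expect the authors' intended reading uses such an identification.

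Next, part (2), which is the substantive point. I would argue: take $h \in H$, $k \in K$; since $\bar H$ and $\bar K$ centralise each other in $G/Z$, the commutator $[h,k]$ lies in $Z$. Thus the map $c_h \colon K \to Z$, $k \mapsto [h,k]$, is well-defined, and because $[h, \cdot]$ restricted to a subgroup where the target is central is a homomorphism (the identity $[h,k_1k_2] = [h,k_1]^{k_2}[h,k_2] = [h,k_1][h,k_2]$ uses $[h,k_1]\in Z$), $c_h$ is a homomorphism $K \to Z$ into an abelian group. Hence $c_h$ kills $K'$. Now invoke part (1): modulo $Z$, $K$ and $K'$ have the same image $\bar K$, so for any $k \in K$ there is $k' \in K'$ with $k \in k'Z$, whence $[h,k] = [h,k'] = 1$. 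Therefore $[H,K] \le [H, K'] \cdot [\text{central corrections}] = 1$, i.e. $[H,K]=1$.

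The main obstacle is the bookkeeping in part (1): making precise in what sense "$K'$ is perfect" holds, and ensuring part (2) only uses the consequence that actually holds, namely that $K'Z(G) = KZ(G)$ together with perfectness of $K'$ (or of a perfect characteristic $L \le K'$ with $LZ = KZ$). Once that is pinned down, part (2) is a two-line commutator-homomorphism argument. I would therefore spend the bulk of the write-up on the descending derived series of $K$ — showing $K^{(n)}Z(G) = KZ(G)$ for all $n$ by induction using surjectivity of $K^{(n)} \to \bar K = \bar K'$, concluding by finiteness that the series stabilises at a perfect term, and observing this term lies in $K'$ and has full image $\bar K$ mod $Z(G)$ — and then deduce both claims from this.
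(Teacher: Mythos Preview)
Your difficulty lies entirely in part~(1), where you go in circles with the derived series and never actually establish that $K'$ is perfect. You correctly observe that perfectness of $KZ(G)/Z(G)$ gives $K'Z(G)=KZ(G)$, but then miss the one-line finish: since every element of $K$ lies in $K'Z(G)$, write $k_i=k_i'z_i$ with $k_i'\in K'$, $z_i\in Z(G)$; then $[k_1,k_2]=[k_1',k_2']\in K''$, so $K'=[K,K]\subseteq K''$ and hence $K'=K''$. No descending chains, no finiteness, no stabilisation needed, and your worry that ``a central extension of a perfect group need not be perfect'' never arises. This is exactly the paper's argument for~(1).

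Your argument for part~(2) is correct and is essentially a repackaging of the paper's. The paper phrases it as: $[H,K']=[H,K'Z(G)]=[H,KZ(G)]=[H,K]\le Z(G)$, so conjugation by any $h\in H$ is a central automorphism of the perfect group $K'$, hence trivial by Lemma~\ref{lemma:perfect}\eqref{item:central-trivial}; thus $[H,K]=[H,K']=1$. Your version---the commutator map $c_h\colon K\to Z(G)$ is a homomorphism into an abelian group, so kills $K'$, and then $K\subseteq K'Z(G)$ finishes---is the same idea read contragrediently. Note that your route for~(2) only uses $K'Z(G)=KZ(G)$, not the full perfectness of $K'$, so once you have the easy fix for~(1) the two parts decouple more than your write-up suggests.
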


\begin{proof}
Since  $K Z(G) / Z(G)$ is
perfect, we have $K Z(G) = K' Z(G)$, so that $K' = K''$, and $K'$ is
perfect. As $[H, K'] = [H, K' Z(G)]  = [H, K Z(G)] = [H, K] \le Z(G)$,
$H$ induces by  conjugation central automorphisms on $K'$,  so that by
Lemma~\ref{lemma:perfect}\eqref{item:central-trivial}  $[H,  K] =  [H,
  K'] = 1$.
\end{proof}

In our situation, take $K =
\ker(\gamma)$. We have that $K Z(G) / Z(G) \cong \iota(K)$ is perfect,
as a direct factor of the perfect group $G/Z(G)$.
Then  Lemma~\ref{lemma:perfect-lifting}\eqref{item:commute} implies  that $G$
is the central product of $H$ and $\ker(\gamma)$, amalgamating $Z(G)$.

We claim  
\begin{lemma}
$\gamma(y) = \iota(y^{-1})$  for  $y \in  H$. 
\end{lemma}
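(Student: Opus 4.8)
The plan is to compare $\gamma$ with the map $y \mapsto \iota(y^{-1})$ on $H$, using the fact that by construction $\iota(H) = \gamma(G)$, and then to pin down exactly which element of $G$ gets sent by $\gamma$ to a prescribed inner automorphism. Fix $y \in H$. Since $\iota(y) \in \gamma(G)$, there is some $z \in G$ with $\gamma(z) = \iota(y)$; the goal is to show $\gamma(y^{-1}) = \iota(y)$, equivalently that we may take $z = y^{-1}$, equivalently (since $\gamma(G)$ and $\iota(\ker(\gamma))$ are complementary in $\Inn(G)$ by Proposition~\ref{prop:perfect}\eqref{item:direct-product}) that $z$ and $y^{-1}$ differ by an element of $\ker(\gamma)$ and that $\gamma$ is injective on a transversal of $\ker(\gamma)$. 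In fact $\gamma$ restricted to $H$ should be shown to be a bijection $H \to \gamma(G)$ with $\gamma(y) = \iota(y^{-1})$; the natural route is to verify the identity on commutators and extend, since $G = G'$.

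First I would establish that $\gamma$ is injective on $H$. By Proposition~\ref{prop:perfect}\eqref{item:direct-product}, $\Inn(G) = \gamma(G) \times \iota(\ker(\gamma))$, and $|\gamma(G)| = |\Inn(G)|/|\iota(\ker(\gamma))| = |\Inn(G)|/|K Z(G)/Z(G)|$ where $K = \ker(\gamma)$; since $G = H K$ and $[H,K]=1$ with $H \cap K \supseteq$ (something central), a counting argument gives $|\iota(H)| = |\gamma(G)|$, and because $\ker(\gamma) \cap H \le Z(G) \le \ker(\gamma)$ one gets that $\gamma|_H$ has the same image-size as $\iota|_H$, forcing $\gamma|_H : H \to \gamma(G)$ to be injective (its kernel on $H$ is $Z(G) \cap H$, which by Proposition~\ref{prop:perfect}\eqref{item:perfect-center} is already killed, so effectively $\gamma|_H$ descends to an injection $H/Z(H) \hookrightarrow \gamma(G)$). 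Next I would use the commutator identity~\eqref{eq:gamma-of-commutators}, $\gamma([h,g^{-1}]) = \iota([\gamma(g),h])$: writing an arbitrary element $y \in H$ as a product of commutators of elements of $H$ (possible since $H$ is perfect — this is where I would invoke that $\iota(H) = \gamma(G) \cong G/\ker\gamma$-image is perfect, hence $H' = H$ modulo the center, and then that $H$ itself is perfect as in Lemma~\ref{lemma:perfect-lifting}(1) applied with the roles swapped), one computes $\gamma(y) = \prod \iota([\gamma(g_i), h_i])$. On the other hand, since for $g_i, h_i \in H$ we expect $\gamma(g_i) = \iota(g_i^{-1})$ inductively, $[\gamma(g_i), h_i] = [\iota(g_i^{-1}), h_i] = \iota([g_i^{-1}, h_i])$, and assembling these with the order-reversal bookkeeping of $\gamma$ against $\iota$ yields $\gamma(y) = \iota(y^{-1})$.

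A cleaner way to organize the last step, avoiding a genuine induction, is as follows: show directly that the map $\delta : H \to \Inn(G)$, $\delta(y) = \gamma(y)\,\iota(y)$, is trivial. Using the antihomomorphism property $\gamma(yz) = \gamma(z)\gamma(y)$ and the $\Aut(G)$-equivariance~\eqref{eq:gamma-is-autinv} together with $\gamma(g)^{\iota(h)} = \gamma(g^{\iota(h)}) = \gamma(h^{-1}gh)$, one checks that $\delta$ restricted to $H$ is a crossed-homomorphism-type object which, because $[H, K]=1$ and the two factors $\gamma(G), \iota(\ker\gamma)$ commute, actually lands in $Z(\Inn(G)) \cap \gamma(G)$ and is multiplicative; since $H$ is perfect, any homomorphism from $H$ to an abelian group is trivial, so $\delta \equiv 1$, i.e. $\gamma(y) = \iota(y)^{-1} = \iota(y^{-1})$ for all $y \in H$.

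The main obstacle I anticipate is the bookkeeping around order-reversal: $\gamma$ is an antihomomorphism while $\iota$ is a homomorphism, so every product expansion flips, and one must be careful that the claimed identity $\gamma(y) = \iota(y^{-1})$ is exactly the one compatible with both the flip in~\eqref{eq:gamma-of-x} and the flip hidden in $[\gamma(g_i), h_i]$ versus $[h_i, g_i^{-1}]$; getting the inverses and the product order to match up is the only real subtlety, and the verification that $\gamma(G)$ and $\iota(\ker\gamma)$ commuting forces $\delta$ to be a genuine homomorphism into an abelian group (rather than merely a crossed homomorphism) is the conceptual point that makes the perfectness of $H$ bite.
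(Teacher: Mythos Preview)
Your ``cleaner way'' is aimed at the same target as the paper --- showing that $\delta(y) = \gamma(y)\iota(y)$ is trivial --- but the argument as written has a real gap. You claim $\delta$ lands in $Z(\Inn(G))$ ``because $[H,K]=1$ and the two factors $\gamma(G), \iota(\ker\gamma)$ commute''. Those facts only give that $\delta(y)$ centralises $\iota(K)$; they say nothing about $\iota(H) = \gamma(G)$, which is the other half of $\Inn(G)$. Until you know $\delta(y)$ centralises $\iota(H)$ you cannot conclude $\delta(y)\in Z(\Inn(G))$, and you cannot conclude $\delta$ is a genuine homomorphism either (the crossed relation $\delta(yz)=\delta(z)\,\delta(y)^{\iota(z)}$ only collapses once you know $\iota(z)$ centralises $\delta(y)$). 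Your first approach, writing $y$ as a product of commutators and invoking $\gamma(g_i)=\iota(g_i^{-1})$ ``inductively'', is circular: there is no parameter to induct on, and the base case is the statement itself.

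The paper closes the gap on the $H$-side not via the direct-factor commutativity but by a direct computation with the operation $\circ$: from~\eqref{eq:circinv} one has $\gamma(x\circ y)=\gamma(yx)$, hence $x\circ y \equiv yx \pmod{K}$ for $x,y\in H$, and then $x^{-1}x^{\gamma(y)\iota(y)} = x^{-1}y^{-1}(x\circ y) \in H\cap K \le Z(G)$. Combined with the easy $K$-side, $\gamma(y)\iota(y)$ is a central automorphism of $G$, hence trivial by Lemma~\ref{lemma:perfect}\eqref{item:central-trivial}. Interestingly, the equivariance you mention \emph{can} be made to do the job by a different route: the map $\iota(y)\mapsto\gamma(y)$ is a well-defined anti-automorphism $\sigma$ of $\iota(H)$ commuting with every inner automorphism of $\iota(H)$; then for all $a,b\in\iota(H)$ one gets $b\,\sigma(b)\in Z(\iota(H))=1$, so $\sigma$ is inversion. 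But that is not the argument you wrote down.
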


\begin{proof}
  Let  $y \in  H$. We  claim that  $\gamma(y) \iota(y)$  is a  central
  automorphism    of    the    perfect    group    $G$,    so that
  by~\ref{lemma:perfect}\eqref{item:central-trivial}    $\gamma(y)   =
  \iota(y^{-1})$.

  If $x \in K = \ker(\gamma)$, we have
  from~\eqref{eq:self-adjoint}
  \begin{equation*}
    [\gamma(y), x] \equiv [y^{-1}, \gamma(x^{-1})] \equiv 1 \pmod{Z(G)},
  \end{equation*}
  so that $\gamma(y)$ induces an automorphism of the characteristic
  subgroup $K$ which is the identity
  modulo $Z(G)$, and so does $\gamma(y)
  \iota(y)$, as $[H, K] = 1$.

  Let now $x \in H$. Consider first the special case when $G = H
  \times K$. Then $\gamma$ is injective on $H$, so
  that~\eqref{eq:circinv} implies $x \circ y = y x$, and thus
  \begin{equation*}
    x^{-1} x^{\gamma(y) \iota(y)} 
    = 
    x^{-1} y^{-1} (x \circ y) y^{-1} y 
    = 
    x^{-1} y^{-1} (y x) y^{-1} y 
    = 
    1,
  \end{equation*}
  that is,
  $\gamma(y) \iota(y)$ is the identity on $H$.

  In the general case, \eqref{eq:circinv} implies that $x \circ y
  \equiv y x \pmod{K}$, so that as above
  \begin{equation*}
    x^{-1}  x^{\gamma(y) \iota(y)}  \equiv x^{-1}  y^{-1} (x  \circ y)
    y^{-1} y \equiv 1 \pmod{K},
  \end{equation*}
  that is, $x^{-1} x^{\gamma(y) \iota(y)} \in K$.
  Clearly $x^{-1} x^{\gamma(y) \iota(y)} \in H$, as $H$ is
  characteristic in $G$. Therefore $x^{-1} x^{\gamma(y) \iota(y)} \in
  H \cap K \le Z(G)$, so that $\gamma(y) \iota(y)$ induces an
  automorphism of $H$ which is the identity modulo $Z(G)$.

  It follows  that $\gamma(y) \iota(y)$  is a central  automorphism of
  $G = H K$, as claimed.
\end{proof}

For $y \in H$ and $x \in \ker(\gamma)$ we have
\begin{equation*}
  x \circ y 
  = 
  x^{\gamma(y)} y 
  =
  x^{\iota(y^{-1})} y
  =
  y x y^{-1} y 
  = 
  y x 
  = 
  y^{\gamma(x)} x
  =
  y \circ x.
\end{equation*}
Also,  if $x, y \in H$ we have $x \circ y = x^{\gamma(y)} y = x^{y^{-1}}
y = y x$.

In the following we will be  writing the elements of $G$ as pairs
in $H  \times \ker(\gamma)$, understanding  that a pair  represents an
equivalence  class with  respect  to the  central product  equivalence
relation which identifies  $(x z,  y)$ with $(x, z  y)$, for $z
\in Z(G)$. 

We have obtained
\begin{theorem}\label{thm:cpd}
  Let $G$ be a finite perfect group. 
  \begin{enumerate}
  \item\label{item:cpd}
    If $N \in \Jc(G)$, then $G$ is  a central product of its subgroups $H
    =   \iota^{-1}(\gamma(G))'$   and   $K = \ker(\gamma)$.  Both   $H$   and
    $K$ are $\Aut(G)$-subgroups of $G$.
  \item For $x \in H$ we have $\gamma(x) = \iota(x^{-1})$.
  \item\label{item:circ}
    $(G, \circ)$ is  also a central product of the  same subgroups. If
    we represent the elements of $G$ as (equivalence classes of) pairs
    in $H \times K$, then
    \begin{equation}\label{eq:product-on-N}
      (x, y) \circ (a, b) = (a x, y b).
    \end{equation}
  \item
    For $(a, b) \in G$, the action of $\nu(a, b)$ on $(x, y) \in G$ is given by
    \begin{equation*}
      (x, y)^{\nu(a, b)}
      =
      (x, y) \circ (a, b)
      =
      (a x, y b),
    \end{equation*}
    that is, $N$ induces the right regular representation on
    $K$, and the left regular representation on $H$.
  \end{enumerate}
\end{theorem}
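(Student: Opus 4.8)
The statement collects the facts established in the discussion preceding it, together with two short coordinate computations for parts~(3)~and~(4), so my plan is largely one of assembly. For part~(1), the discussion above already shows that $A := \iota^{-1}(\gamma(G))$ and $K := \ker(\gamma)$ are $\Aut(G)$-subgroups with $G = AK$, and that, since $KZ(G)/Z(G)\cong\iota(K)$ is a direct factor of the perfect group $\Inn(G)$, Lemma~\ref{lemma:perfect-lifting} yields $[A,K]=1$; hence $G$ is the central product of $A$ and $K$ amalgamating $Z(G)$. To pass to the $H$ of the statement I would note that $AZ(G)/Z(G)=\gamma(G)$ is perfect, so $A=A'Z(G)$; as $Z(G)\le K$ this gives $G=A'K=HK$ with $H:=A'=\iota^{-1}(\gamma(G))'$, the relation $[H,K]=1$ is inherited from $[A,K]=1$, and $H$ is now perfect (being $A'$) and characteristic (the derived subgroup of the characteristic subgroup $A$). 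Part~(2), that $\gamma(x)=\iota(x^{-1})$ for $x\in H$, is exactly the Lemma proved immediately before the theorem, stated there for the larger subgroup $A\supseteq H$.

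For part~(3) the key step is a formula for $\gamma$ in coordinates: writing $g\in G$ as $g=xy$ with $x\in H$ and $y\in K$, the antihomomorphism property in~\eqref{eq:gamma-for-normal}, the fact that $y\in\ker(\gamma)$, and part~(2) give
\[
\gamma(g)=\gamma(xy)=\gamma(y)\gamma(x)=\iota(x^{-1}),
\]
so $\gamma(g)$ depends only on the $H$-component of $g$ and equals conjugation by its inverse. Then, for $g=xy$ and $h=ab$ with $x,a\in H$ and $y,b\in K$, using $[H,K]=1$ to move the $K$-entries past the $H$-entries, I would compute
\[
g\circ h=g^{\gamma(h)}h=(xy)^{\iota(a^{-1})}ab=a(xy)a^{-1}ab=(axa^{-1})\,y\,ab=(axa^{-1})\,a\,yb=(ax)(yb).
\]
Specialising this shows that $\circ$ restricts on $H$ to $x\circ y=yx$ (so $(H,\circ)=H^{\opp}$) and on $K$ to the original product (so $(K,\circ)=K$); that $H$ and $K$ commute in $(G,\circ)$, since $x\circ y=xy=y\circ x$ for $x\in H$, $y\in K$; and that $H\circ K=HK=G$. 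Hence $(G,\circ)$ is again the central product of $H$ and $K$, with the same amalgamated subgroup $H\cap K\le Z(G)$. Representing elements as equivalence classes of pairs $(x,y)\in H\times K$ under $(xz,y)\sim(x,zy)$ for $z\in Z(G)$ — for which the formula just obtained is readily checked to be consistent — the displayed identity is precisely~\eqref{eq:product-on-N}.

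Part~(4) is then immediate from Theorem~\ref{thm:normal-regular}, whereby $g^{\nu(h)}=g\circ h$: taking $g=(x,y)$ and $h=(a,b)$ we obtain $(x,y)^{\nu(a,b)}=(ax,yb)$. Reading off the two coordinates, $\nu(a,b)$ acts on $H$ by $x\mapsto ax$, i.e.\ as the left regular representation of $H$, and on $K$ by $y\mapsto yb$, i.e.\ as the right regular representation of $K$, which is what the statement asserts.

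The substantive content — the direct product decomposition $\Inn(G)=\gamma(G)\times\iota(\ker(\gamma))$ and its lifting to a central product of $G$ — is already carried out in Proposition~\ref{prop:perfect} and Lemma~\ref{lemma:perfect-lifting}, so here there is no real obstacle. The one place where I would pause is the passage to the pair-with-amalgamation description in part~(3): verifying that $(x,y)\circ(a,b):=(ax,yb)$ is well defined on equivalence classes, and making sure the antihomomorphism~\eqref{eq:gamma-for-normal} is applied with its factors in the order that matches the central product conventions.
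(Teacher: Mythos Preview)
Your proposal is correct and follows essentially the same route as the paper: the substantive work is indeed done in Proposition~\ref{prop:perfect}, Lemma~\ref{lemma:perfect-lifting}, and the unnamed Lemma giving $\gamma(y)=\iota(y^{-1})$, and the theorem is then a matter of assembly plus the coordinate computation for~\eqref{eq:product-on-N}. Two small remarks: you are in fact more careful than the paper in distinguishing $A=\iota^{-1}(\gamma(G))$ from $H=A'$ and explaining why $G=HK$ (the paper silently redefines $H$ between the discussion and the statement); and your single coordinate computation for part~(3) is a slightly cleaner packaging of what the paper does just before the theorem by checking the cases $x,y\in H$ and $x\in K$, $y\in H$ separately.
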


We note the following analogue of
Proposition~\ref{prop:right-and-left}\eqref{item:right-and-left}~and
\eqref{item:inversion}. 
\begin{proposition}
Let $G$ be a finite perfect group, and let $G = H K$ be a central
decomposition, with $\Aut(G)$-invariant subgroups $H, K$. 
Consider the following two elements of $\Jc(G)$.
\begin{enumerate}
\item  $N_{1}$,  for  which  $\ker(\gamma_{1})  = K  Z(G)$  and  $H  =
  \iota^{-1}(\gamma_{1}(G))'$,  with  $\gamma_{1}(x) =  \iota(x^{-1})$
  for $x \in H$, and associated  group operation $(x, y) \circ_{1} (a,
  b) = (a x, y b)$.
\item  $N_{2}$,  for  which  $\ker(\gamma_{2})  = H  Z(G)$  and  $K  =
  \iota^{-1}(\gamma_{2}(G))'$,  with  $\gamma_{2}(x) =  \iota(x^{-1})$
  for $x \in K$, and associated  group operation $(x, y) \circ_{2} (a,
  b) = (x a, b y)$.
\end{enumerate}
Then
\begin{enumerate}
\item $N_{1}^{\inv} = N_{2}$. 
\item $\inv: (N_{1}, \circ_{1}) \to (N_{2}, \circ_{2})$ is an isomorphism.
\item $C_{S(G)}(N_{1}) = N_{2}$ and $C_{S(G)}(N_{2}) = N_{1}$.
\end{enumerate}
\end{proposition}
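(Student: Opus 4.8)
The plan is to verify the three claims in turn, exploiting the symmetry between $N_1$ and $N_2$ as much as possible so that the third claim costs almost nothing once the first two are done. Throughout I will use the pair description of $G$ as (equivalence classes of) pairs in $H \times K$ from Theorem~\ref{thm:cpd}, and the explicit formulas $\nu_1(a,b)\colon (x,y) \mapsto (ax, yb)$ and $\nu_2(a,b)\colon (x,y) \mapsto (xa, by)$ for the regular representations of $N_1$ and $N_2$.

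First I would compute the effect of inversion on these permutations. For $\sigma \in S(G)$ recall $\sigma^{\inv} = \inv\,\sigma\,\inv$, so I need to track how $\inv$ acts on a pair $(x,y) \in H \times K$. Since $G = HK$ is a central product with $H, K$ commuting elementwise (Theorem~\ref{thm:cpd}), $(x,y)^{\inv} = (x^{-1}, y^{-1})$. Hence for $(a,b) \in G$ and $(x,y) \in G$,
\begin{equation*}
  (x,y)^{\inv\,\nu_1(a,b)\,\inv}
  =
  \bigl((x^{-1}, y^{-1})^{\nu_1(a,b)}\bigr)^{\inv}
  =
  (a x^{-1}, y^{-1} b)^{\inv}
  =
  (x a^{-1}, b^{-1} y)
  =
  (x,y)^{\nu_2(a^{-1}, b^{-1})}.
\end{equation*}
This shows $\nu_1(a,b)^{\inv} = \nu_2(a^{-1},b^{-1})$, and since $(a,b)\mapsto(a^{-1},b^{-1})$ is a bijection of $G$, we get $N_1^{\inv} = N_2$, proving claim~(1); moreover the map $\nu_1(a,b) \mapsto \nu_1(a,b)^{\inv} = \nu_2(a^{-1},b^{-1})$ is precisely $\inv$ transported through the isomorphisms $\nu_1, \nu_2$, and since $(a,b) \mapsto (a^{-1},b^{-1})$ sends $(a,b)\circ_1(c,d) = (ca, bd)$ to $(a^{-1}c^{-1}, b^{-1}d^{-1}) = (a^{-1}, b^{-1})\circ_2(c^{-1}, d^{-1})$ (using $\circ_2$'s formula $(x,y)\circ_2(a,b) = (xa, by)$), it is an anti-automorphism of the group structure --- equivalently, $\inv\colon (N_1,\circ_1)\to(N_2,\circ_2)$ is an isomorphism, which is claim~(2). (One should double-check the factor ordering in $\circ_1$ versus $\circ_2$ here; this bookkeeping is the one place to be careful.)

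For claim~(3), the cleanest route is to compute $C_{S(G)}(N_1)$ directly from the action formulas and recognise it as $N_2$. Because $N_1$ is regular, $C_{S(G)}(N_1)$ is also regular and has the same order as $G$; so it suffices to show $N_2 \le C_{S(G)}(N_1)$, i.e. that every $\nu_2(a,b)$ commutes with every $\nu_1(c,d)$. This is immediate from the formulas: applying $\nu_1(c,d)$ then $\nu_2(a,b)$ to $(x,y)$ gives $(cx, yd) \mapsto (cxa, byd)$, while the other order gives $(xa, by) \mapsto (cxa, byd)$ --- they agree, using only that the $H$-coordinate and $K$-coordinate are manipulated independently and that left and right multiplications within a single group commute. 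Since $N_2$ is regular, $N_2 = C_{S(G)}(N_1)$, and by the symmetric argument (or by applying $\inv$ and using claim~(1)) $N_1 = C_{S(G)}(N_2)$.

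I do not expect a genuine obstacle here: all three statements reduce to the explicit pair-formulas of Theorem~\ref{thm:cpd}, and the structure mirrors Proposition~\ref{prop:right-and-left}\eqref{item:right-and-left},\eqref{item:inversion} exactly --- $H$ plays the role of the "left" side and $K$ the "right" side. The only thing requiring attention is the order of the factors in the two circle-products (they are genuinely opposite on the $H$-part), so that the anti-automorphism claim in~(2) comes out with the correct handedness; everything else is a one-line verification, and the regularity of the centralizer of a regular subgroup is what lets us upgrade the containments $N_2 \le C_{S(G)}(N_1)$ and $N_1 \le C_{S(G)}(N_2)$ to equalities without a separate counting argument.
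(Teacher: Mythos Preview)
Your proof is correct and follows essentially the same route as the paper. The computation for claim~(1) is identical to the paper's. For claim~(2) the paper simply invokes Lemma~\ref{lemma:conjugation} (an element of $S(G)$ fixing $1$ and conjugating one regular subgroup to another is automatically an isomorphism between the associated $\circ$-structures), whereas you verify the homomorphism property by hand; note a small slip in your $K$-coordinate: $(ca,bd)^{\inv} = (a^{-1}c^{-1}, d^{-1}b^{-1})$, not $(a^{-1}c^{-1}, b^{-1}d^{-1})$, but this is precisely what $(a^{-1},b^{-1})\circ_{2}(c^{-1},d^{-1}) = (a^{-1}c^{-1}, d^{-1}b^{-1})$ gives, so the conclusion is unaffected. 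For claim~(3) the paper gives no details beyond ``straightforward''; your argument---checking that the explicit actions of $\nu_{1}$ and $\nu_{2}$ commute and then using that the centraliser in $S(G)$ of a regular subgroup is itself regular---is the natural way to fill this in.
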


\begin{proof}
The proof is straightforward. If $N_{i} = \Set{  \nu_{i}(a, b) : (a,
  b) \in G}$ as in Section~\ref{sec:regular},we have 
\begin{multline*}
  (x, y)^{\inv \nu_{1}(a, b) \inv}
  =
  (x^{-1}, y^{-1})^{\nu_{1}(a, b) \inv}
  =\\=
  (a x^{-1}, y^{-1} b)^{\inv}
  =
  (x a^{-1}, b^{-1} y)
  =
 (x, y)^{\nu_{2}((a, b)^{\inv})},
\end{multline*}
and then, as in Lemma~\ref{lemma:conjugation}, $\inv: (N_{1},
\circ_{1}) \to (N_{2}, \circ_{2})$ is an isomorphism. 
\end{proof}

We now give a description of all possible central product
decompositions of the perfect group $G$ as in Theorem~\ref{thm:cpd}.

We deal first  with the particular case when $Z(G) =  1$, where we are
able to show  that $\Jc(G) = \Ic(G) = \Hc(G)$  and determine this set,
and the group $T(G) = \NHol(G)  / \Hol(G)$.  When $Z(G)$ is allowed to
be non-trivial, we are able  to determine $\Jc(G)$. However, 
examples show that in this case $\Hc(G)$, $\Ic(G)$ and
$\Jc(G)$ can be distinct, and we 
are unable at the moment to describe $T(G)$.

\subsection{The centerless case}
\label{sub:centerless}

Suppose $Z(G) = 1$, so that $\iota : G \to \Inn(G)$ is an isomorphism
of $\Aut(G)$-groups. 

Consider a Krull-Remak-Schmidt decomposition
\begin{equation*}
G = A_{1} \times A_{2} \times \dots \times A_{n}
\end{equation*}
of  $G$   as  an   $\Aut(G)$-group.   Since  $Z(G)   =  1$,   this  is
unique~\cite[3.3.8,  p.   83]{Rob1996}.   Therefore  the only  way  to
decompose  $G$ as  the ordered  direct product  of  two characteristic
subgroups $H,  K$ is by grouping  together the $A_{i}$,  so that there
are $2^{n}$ ways  of doing this. If $G  = H \times K$ is  one of these
ordered  decompositions, define  an antihomomorphism  $\gamma :  G \to
\Aut(G)$  by  $\gamma(k)  =  1$  for  $k  \in  K$,  and  $\gamma(h)  =
\iota(h^{-1})$, for $h \in H$. Then $\gamma$ satisfies also the second
identity  of~\eqref{eq:gamma-for-normal},  and  we  have  obtained  an
element          $N         \in         \Jc(G)$          as         in
Theorem~\eqref{thm:cpd}\eqref{item:circ}.  The involution  $\theta \in
\NHol(G)$ given by $(h, k)^ {\theta} = (h^{-1}, k)$, for $h \in H$ and
$k \in K$ is an isomorphism $G \to (G, \circ)$. We have obtained
\begin{theorem}\label{thm:main-for-centerless}
  Let $G$ be a finite perfect group with $Z(G) = 1$. 
  \begin{enumerate}
  \item
    If $N  \in \Jc(G)$, that is,  $N \norm \Hol(G)$ is  regular, then $N
    \in \Hc(G)$, that is, $N \cong G$.
  \item If $n$  is the length of a  Krull-Remak-Schmidt decomposition of
    $G$ as an $\Aut(G)$-group, then $\Hc(G)$ has $2^{n}$ elements.
  \item $T(G)$ is an elementary abelian group of order $2^{n}$.
  \end{enumerate}
\end{theorem}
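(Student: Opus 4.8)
The plan is to prove the three assertions in turn, using the central product decomposition machinery of Theorem~\ref{thm:cpd} together with the uniqueness of the Krull--Remak--Schmidt decomposition in the centerless case. For part~(1), suppose $N \in \Jc(G)$. By Theorem~\ref{thm:cpd}\eqref{item:cpd}, $G$ is a central product of the $\Aut(G)$-subgroups $H = \iota^{-1}(\gamma(G))'$ and $K = \ker(\gamma)$; since $Z(G) = 1$ this central product is a genuine direct product $G = H \times K$. By Theorem~\ref{thm:cpd}\eqref{item:circ}, writing elements of $G$ as pairs in $H \times K$, the operation is $(x,y) \circ (a,b) = (ax, yb)$, so $(G,\circ) = H^{\mathrm{opp}} \times K$. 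Since inversion is an isomorphism between any group and its opposite, the map $(h,k) \mapsto (h^{-1}, k)$ is an isomorphism $G \to (G,\circ)$, whence $N \cong (G,\circ) \cong G$; by Lemma~\ref{lemma:reghol}\eqref{item:reghol-2}, $N_{S(G)}(N) \cong \Hol(N) \cong \Hol(G)$, but more directly $N \in \Jc(G) \subseteq \Ic(G)$ already gives $N_{S(G)}(N) = \Hol(G)$ once we know the normalizer is not strictly larger --- and this follows because $\Aut(G)N \le N_{S(G)}(N) \le \Hol(G)$ forces equality as $|\Aut(G) N| = |\Hol(G)|$. Hence $N \in \Hc(G)$.

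For part~(2), I would show that the map sending an ordered pair $(H,K)$ of characteristic subgroups with $G = H \times K$ to the corresponding $N \in \Jc(G)$ (with $\gamma$ as in the text: $\gamma|_K = 1$, $\gamma(h) = \iota(h^{-1})$ for $h \in H$) is a bijection onto $\Jc(G) = \Hc(G)$. Surjectivity is exactly Theorem~\ref{thm:cpd}: every $N \in \Jc(G)$ arises this way from $H = \iota^{-1}(\gamma(G))'$ and $K = \ker(\gamma)$. For injectivity, note that $N$ (equivalently the operation $\circ$, equivalently the map $\gamma$, since $\nu(g) = \gamma(g)\rho(g)$) is recovered from the ordered pair $(H,K)$. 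Finally, by the uniqueness of the Krull--Remak--Schmidt decomposition $G = A_1 \times \dots \times A_n$ as an $\Aut(G)$-group (valid since $Z(G) = 1$, cf.~\cite[3.3.8]{Rob1996}), every characteristic direct factor of $G$ is a product of a subset of the $A_i$; hence an ordered decomposition $G = H \times K$ into two characteristic factors corresponds to choosing which $A_i$ go into $H$, giving exactly $2^n$ ordered pairs. Therefore $|\Hc(G)| = 2^n$.

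For part~(3), recall $T(G) = \NHol(G)/\Hol(G)$ acts regularly on $\Hc(G)$, so $|T(G)| = |\Hc(G)| = 2^n$; it remains to see $T(G)$ is elementary abelian. For each subset $I \subseteq \{1,\dots,n\}$, let $H_I = \prod_{i \in I} A_i$, $K_I = \prod_{i \notin I} A_i$, and let $\theta_I \in S(G)$ be the involution acting as inversion on $H_I$ and as the identity on $K_I$ (using $G = H_I \times K_I$). One checks $\theta_I$ centralizes $\Aut(G)$ --- because each $A_i$ is $\Aut(G)$-invariant, so every automorphism respects the decomposition and commutes with coordinatewise inversion --- and normalizes $\rho(G)$, hence $\theta_I \in \NHol(G)$; moreover $\theta_I$ conjugates $\rho(G)$ to the subgroup $N_I \in \Hc(G)$ built from $(H_I, K_I)$ (this is the computation indicated just before the theorem statement, via Lemma~\ref{lemma:conjugation}). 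The map $I \mapsto \theta_I\Hol(G)$ is then a bijection $2^{\{1,\dots,n\}} \to T(G)$, and since $\theta_I \theta_J \equiv \theta_{I \triangle J} \pmod{\Hol(G)}$ (inversion on $A_i$ composed with itself is trivial, so the symmetric difference appears) and each $\theta_I^2 = 1$, the group $T(G)$ is isomorphic to $(\Z/2\Z)^n$.

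The main obstacle I anticipate is part~(3): verifying cleanly that the cosets $\theta_I\Hol(G)$ multiply according to symmetric difference. One must be careful that $\theta_I$ and $\theta_J$ are defined relative to possibly different direct-sum coordinatizations, so the identity $\theta_I\theta_J \in \theta_{I\triangle J}\Hol(G)$ is not literally $\theta_I\theta_J = \theta_{I\triangle J}$ on the nose but holds modulo an element of $\Hol(G)$ (indeed modulo $\rho(G)$, after adjusting fixed points); pinning this down requires tracking how the $\theta$'s act on a common refinement of the decompositions, namely the $A_i$ themselves, where every $\theta_I$ is simply "invert the coordinates indexed by $I$." Once everything is expressed on that common coordinate system the computation is routine, but setting it up correctly --- and confirming each such $\theta_I$ genuinely lies in $\NHol(G)$ rather than merely normalizing $N_I$ --- is where the care is needed.
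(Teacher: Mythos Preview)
Your approach follows the paper's essentially to the letter, but there are two slips worth flagging.

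In part~(1), the inclusion you write, $\Jc(G) \subseteq \Ic(G)$, goes the wrong way (the paper has $\Hc(G) \subseteq \Ic(G) \subseteq \Jc(G)$), and the inequality $N_{S(G)}(N) \le \Hol(G)$ is backwards: from $N \norm \Hol(G)$ one only gets $\Hol(G) \le N_{S(G)}(N)$. The correct argument is the one you sketch first: since $N \cong G$, Lemma~\ref{lemma:reghol}\eqref{item:reghol-2} gives $\lvert N_{S(G)}(N)\rvert = \lvert\Hol(N)\rvert = \lvert\Hol(G)\rvert$, and combined with $\Hol(G) \le N_{S(G)}(N)$ this forces equality. (Equivalently, as in the paper, once the explicit involution $\theta$ is known to lie in $\NHol(G)$ and to conjugate $\rho(G)$ to $N$, the equality $N_{S(G)}(N) = \Hol(G)^{\theta} = \Hol(G)$ is immediate.)

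In part~(3), the claim that $\theta_I$ normalizes $\rho(G)$ is false---that would place $\theta_I$ in $\Hol(G)$ itself and trivialize everything---and indeed you contradict it two lines later when you say $\theta_I$ conjugates $\rho(G)$ to $N_I$. What is true is that $\theta_I$ centralizes $\Aut(G)$ and sends $\rho(G)$ to $N_I \le \Hol(G)$, so $\Hol(G)^{\theta_I} = \Aut(G)\, N_I = \Hol(G)$ by the usual order count, whence $\theta_I \in \NHol(G)$. Finally, the obstacle you anticipate is a phantom: since every $\theta_I$ is defined on the \emph{same} coordinate system $G = A_1 \times \cdots \times A_n$ (inverting exactly the $A_i$ with $i \in I$), the identity $\theta_I\,\theta_J = \theta_{I \triangle J}$ holds on the nose, not merely modulo $\Hol(G)$.
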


\subsection{Non-trivial center}
\label{sub:centermore}

We  now  consider  the  situation  when  $Z(G)$  is  (allowed  to  be)
non-trivial.

We describe the  elements $N$ of $\Jc(G)$,  in analogy  with the
centerless case. 

As in  the centerless  case, we  may consider  the Krull-Remak-Schmidt
decomposition
\begin{equation}\label{eq:KRS-for-Inn}
\Inn(G) = A_{1} \times A_{2} \times \dots \times A_{n}
\end{equation}
of $\Inn(G)$ as  an $\Aut(G)$-group. This corresponds  uniquely to the
central product decomposition of $G$
\begin{equation}\label{eq:cpd}
G = B_{1}  B_{2}  \cdots  B_{n},
\end{equation}
where    $B_{i}     =    \iota^{-1}(A_{i})'$     are    \emph{perfect}
$\Aut(G)$-subgroups,   which    are   centrally    indecomposable   as
$\Aut(G)$-subgroups.       Therefore      the     central      product
decomposition~\eqref{eq:cpd}  is also  unique. (Recall  also that  the
Krull-Remak-Schmidt    of    $G$    in   terms    of    indecomposable
$\Aut(G)$-subgroups    is   unique,    because   of~\cite[3.3.8,    p.
  83]{Rob1996}~and
Lemma~\ref{lemma:perfect}.\eqref{item:central-trivial}.)

As in the  centerless case, we obtain that every  decomposition $G = H
\ker(\gamma)$ as in Theorem~\ref{thm:cpd}  can be obtained by grouping
together the $B_{i}$  in two subgroups $H$ and $K$,  and then defining
an antihomomorphism $\gamma : G \to \Aut(G)$ by $\gamma(k) = 1$ for $k
\in K  Z(G)$, and $\gamma(h) =  \iota(h^{-1})$, for $h \in  H$, and
then $\circ$ as in~\eqref{eq:product-on-N}.  As in
the centerless case,  this yields an element $N  \in \Jc(G)$. Moreover
$(G, \circ)$  is still  a central  product of $H$  and $K  Z(G)$, with
$\circ$ as in Theorem~\ref{thm:cpd}\eqref{item:circ}.

We have obtained the following weaker analogue of
Theorem~\ref{thm:main-for-centerless}. 
\begin{theorem}\label{thm:main-for-centermore}
  Let $G$ be a finite perfect group.

  If $n$  is the length of a  Krull-Remak-Schmidt decomposition of
  $\Inn(G)$ as an $\Aut(G)$-group, then $\Jc(G)$ has $2^{n}$ elements,
  that is, there are $2^{n}$ regular subgroups $N \norm \Hol(G)$.
\end{theorem}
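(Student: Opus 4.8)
The plan is to establish a bijection between the elements of $\Jc(G)$ and the ordered decompositions of the Krull-Remak-Schmidt factorization~\eqref{eq:KRS-for-Inn} into two $\Aut(G)$-invariant pieces, of which there are exactly $2^{n}$. The work of Theorem~\ref{thm:cpd} already gives the map in one direction: from $N \in \Jc(G)$ we recover the pair of $\Aut(G)$-subgroups $H = \iota^{-1}(\gamma(G))'$ and $K = \ker(\gamma)$, with $G = HK$ a central product; and since $\gamma(G) = \iota(H)$ and $\iota(K) = \iota(\ker(\gamma))$ are $\Aut(G)$-invariant direct factors of $\Inn(G)$ by Proposition~\ref{prop:perfect}\eqref{item:direct-product}, each is a product of a subset of the $A_{i}$, so $N$ determines a partition of $\Set{1, \dots, n}$ into two (ordered) blocks. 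What remains is to check that this assignment is both injective and surjective.

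For surjectivity, I would argue exactly as in the paragraph preceding the theorem statement and in the centerless case (Theorem~\ref{thm:main-for-centerless}): given any grouping of the $B_{i}$ into $\Aut(G)$-subgroups $H$ and $K$ with $G = HK$ centrally, define $\gamma : G \to \Aut(G)$ by $\gamma(h) = \iota(h^{-1})$ for $h \in H$ and $\gamma(k) = 1$ for $k \in K Z(G)$. One must verify this is well-defined (the two prescriptions agree on $H \cap K Z(G) \le Z(G)$, where both give the identity by Lemma~\ref{lemma:perfect}\eqref{item:central-trivial} applied to the relevant characteristic perfect subgroup, or directly since $\iota$ kills $Z(G)$) and that it satisfies both identities of~\eqref{eq:gamma-for-normal}: the antihomomorphism property follows because $H$ and $K Z(G)$ commute elementwise and $\iota$ is an antihomomorphism up to the inversion built into the definition, and $\Aut(G)$-equivariance follows because $H$, $K$, $Z(G)$ are all $\Aut(G)$-invariant and $\iota$ is a map of $\Aut(G)$-groups. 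Then Theorem~\ref{thm:normal-regular} produces the corresponding $N \in \Jc(G)$, and by construction it maps back to the chosen grouping.

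For injectivity, suppose $N_{1}, N_{2} \in \Jc(G)$ give rise to the same ordered pair $(\iota(H), \iota(K))$ of subgroups of $\Inn(G)$, hence to the same characteristic subgroups $H = \iota^{-1}(\gamma_{1}(G))' = \iota^{-1}(\gamma_{2}(G))'$ and $K = \ker(\gamma_{1}) = \ker(\gamma_{2})$ of $G$ (using that $K Z(G) / Z(G) = \iota(K)$ determines $K$ as $K = (\iota^{-1}(\iota(K)))'$, the derived subgroup being perfect and characteristic). By Theorem~\ref{thm:cpd}\eqref{item:cpd} we then have $\gamma_{1}(x) = \iota(x^{-1}) = \gamma_{2}(x)$ for $x \in H$, and $\gamma_{1}(y) = 1 = \gamma_{2}(y)$ for $y \in K$. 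Since $G = HK$, every element of $G$ is a product of an element of $H$ and an element of $K$, and both $\gamma_{1}, \gamma_{2}$ are antihomomorphisms (Theorem~\ref{thm:normal-regular}), so $\gamma_{1} = \gamma_{2}$ on all of $G$; hence $N_{1} = N_{2}$ by the equivalence in Theorem~\ref{thm:normal-regular}.

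The main obstacle I anticipate is the careful bookkeeping in the surjectivity step, specifically confirming that the formula $\gamma(h) = \iota(h^{-1})$, $\gamma(kz) = 1$ is genuinely well-defined on the central product and does satisfy the first identity $\gamma(gh) = \gamma(h)\gamma(g)$ globally (not just on $H$ or on $K$ separately) — this requires using $[H, K] = 1$ together with $\gamma(h k) = \gamma(h) = \iota(h^{-1})$ and $\gamma(k h) = \gamma(h) = \iota(h^{-1})$ and checking consistency when an element is written in two ways modulo $Z(G)$. Everything else is a direct appeal to the already-established Theorems~\ref{thm:cpd}~and \ref{thm:normal-regular} and the uniqueness of the Krull-Remak-Schmidt decomposition~\eqref{eq:KRS-for-Inn} recorded just before the statement.
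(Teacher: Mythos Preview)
Your proposal is correct and follows essentially the same route as the paper, whose argument for this theorem is precisely the discussion in the paragraphs immediately preceding its statement. You are more explicit than the paper about the injectivity half of the bijection and about the well-definedness check for $\gamma$ on the central product; the only small slip is in your parenthetical --- $\ker(\gamma)$ is recovered directly as $\iota^{-1}(\iota(\ker(\gamma)))$ (since $Z(G) \le \ker(\gamma)$ by Proposition~\ref{prop:perfect}\eqref{item:perfect-center}), not via a further derived subgroup.
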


Write  $Y =  H  \cap KZ(G)$.  As  the subgroups  $H$  and $KZ(G)$  are
characteristic in $G$, we obtain that  the elements of $\Aut(G)$ can
be described via the set of pairs
\begin{equation*}
  \Set{ (\sigma, \tau) : \sigma \in \Aut(H), \tau \in \Aut(K Z(G)),
  \sigma\restriction_{Y} = \tau\restriction_{Y} }.
\end{equation*}
Theorem~\ref{thm:normal-regular}\eqref{item:every-auto-tells-a-story}
states that $\Aut(G) \le \Aut(G, \circ)$. 
However, the latter group might well be bigger than the former. This
is shown by the following
\begin{proposition}\label{prop:not-char}
  There exist perfect and centrally indecomposable groups $Q_{1}, Q_{2},
  Q_{3}$, and a central product $G = Q_{1} Q_{2} Q_{3}$, such that
  \begin{enumerate}
  \item each $Q_{i}$ is characteristic in $G$,
  \item in the group $(G, \circ)$ obtained by replacing $Q_{1}$ with
    its opposite, the subgroups $Q_{1}$ and $Q_{2}$ are exchanged by an
    automorphism of $(G, \circ)$, and thus are not
    characteristic in $(G, \circ)$.
  \end{enumerate}
\end{proposition}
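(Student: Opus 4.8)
The proposition asks for a concrete construction, so the plan is to exhibit the three perfect, centrally indecomposable groups $Q_1, Q_2, Q_3$ explicitly, with enough control over their automorphisms and central products that we can verify (i) and (ii) directly. The governing idea is this: if $Q_1$ and $Q_2$ are \emph{isomorphic} centrally indecomposable perfect groups, glued along a common central subgroup inside $G = Q_1 Q_2 Q_3$, then an automorphism of $G$ that swaps them would have to exist already — unless the way they are amalgamated into $G$ breaks the symmetry. Passing from $G$ to $(G,\circ)$ replaces $Q_1$ by its opposite $Q_1^{\mathrm{opp}}$; via the isomorphism $\inv : Q_1 \to Q_1^{\mathrm{opp}}$ this is the same abstract group, but the amalgamation with $Q_2$ gets ``twisted'' by inversion on the shared center, and the point of the example is to choose the data so that precisely this twist restores a $Q_1 \leftrightarrow Q_2$ symmetry that was absent in $G$.

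The key steps, in order. First, I would recall the representation-theoretic device of Section~\ref{sec:After-Holt} (Proposition~\ref{lemma:After-Holt}): it produces a perfect group $Q$ with center $Z(Q)$ of a prescribed small order, and with an outer automorphism acting in a prescribed way on $Z(Q)$ — in particular one can arrange that $Q$ has an automorphism inverting its center, or alternatively that it does \emph{not}. Second, take $Q_1 \cong Q_2 \cong Q$ for such a $Q$ whose outer automorphism group does \emph{not} allow inversion of the center to be glued symmetrically with the ``straight'' copy; form $G$ as the central product of $Q_1$, $Q_2$, $Q_3$ amalgamating subgroups of $Z(Q_i)$ along an identification $\varphi : Z(Q_1) \to Z(Q_2)$ chosen so that no automorphism of $G$ interchanges $Q_1$ and $Q_2$ (this is where $Q_3$ earns its keep: it is used to pin down the center and to obstruct the swap in $G$, e.g. by being glued asymmetrically to the two factors, or simply by ensuring $Z(G)$ has the right structure). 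Because each $Q_i$ is perfect and centrally indecomposable, Theorem~\ref{thm:cpd} (and the uniqueness of the decomposition~\eqref{eq:cpd}) guarantees each $Q_i$ is characteristic in $G$, giving (i). Third, form $(G,\circ)$ by replacing $Q_1$ with $Q_1^{\mathrm{opp}}$ as in Theorem~\ref{thm:cpd}\eqref{item:circ}; track what happens to the amalgamation: on the shared central subgroup the gluing map $\varphi$ gets composed with $\inv$, and by the choice of $\varphi$ the resulting amalgam of $Q_1^{\mathrm{opp}} \cong Q$ with $Q_2 \cong Q$ is now the \emph{symmetric} one. Concretely exhibit the automorphism $\eta$ of $(G,\circ)$ that sends $Q_1 \to Q_2$ and $Q_2 \to Q_1$ (built from $\inv : Q_1^{\mathrm{opp}} \to Q$ composed with the isomorphism $Q \to Q_2$, together with its inverse, checked to be well-defined on the central-product equivalence classes and to fix $Q_3$). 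Since $\eta$ swaps $Q_1$ and $Q_2$, neither can be characteristic in $(G,\circ)$, giving (ii).

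The main obstacle is the third step: producing $\varphi$ (and the $Q_3$-gluing) so that simultaneously (a) \emph{in $G$} there is no automorphism swapping $Q_1$ and $Q_2$, yet (b) \emph{after twisting by $\inv$} such an automorphism appears. This is a delicate ``asymmetry that becomes symmetry under inversion'' condition, and it forces the center of the building block $Q$ to be non-trivial of even order (so that $\inv$ acts non-trivially on it) while $\mathrm{Out}(Q)$ is restricted enough not to undo the asymmetry on its own; verifying that such a $Q$ exists is exactly what the construction of Section~\ref{sec:After-Holt} is for, and the bulk of the real work is checking the two non-symmetry/symmetry claims against the explicit $Q$ produced there. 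Everything else — perfectness, central indecomposability, the well-definedness of maps on central-product classes, and the characteristic property via Theorem~\ref{thm:cpd} — is routine once the right $Q$ and $\varphi$ are in hand.
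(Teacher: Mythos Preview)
Your overall strategy matches the paper's: take $Q_{1}\cong Q_{2}$ isomorphic, $Q_{3}$ non-isomorphic to them, amalgamate the centers asymmetrically so that no swap exists in $G$, then observe that the $\inv$-twist on the amalgamation restores the $Q_{1}\leftrightarrow Q_{2}$ symmetry in $(G,\circ)$. However, two points in your plan are genuine gaps.

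First, your justification of~(i) is wrong. You write that ``because each $Q_{i}$ is perfect and centrally indecomposable, Theorem~\ref{thm:cpd} (and the uniqueness of the decomposition~\eqref{eq:cpd}) guarantees each $Q_{i}$ is characteristic in $G$''. Uniqueness of~\eqref{eq:cpd} only tells you that $\Aut(G)$ permutes the factors; since $Q_{1}\cong Q_{2}$, nothing formal prevents an automorphism from swapping them. The paper's argument is not formal: it uses the specific property of the building blocks from Proposition~\ref{lemma:After-Holt} that $\Aut(L_{p})$ acts \emph{trivially} on $Z(L_{p})$. Concretely, with $Z(Q_{1})=\Span{a_{1}}$, $a_{2}=a_{1}^{\zeta}$, $Z(Q_{3})=\Span{b}$, one amalgamates via $a_{2}=a_{1}^{-1}=b$. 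An automorphism $\alpha$ of $G$ swapping $Q_{1}$ and $Q_{2}$ would satisfy $\alpha\restriction_{Q_{1}}\zeta^{-1}\in\Aut(Q_{1})$, hence fix $a_{1}$, forcing $a_{1}^{\alpha}=a_{2}=a_{1}^{-1}$; but $\alpha\restriction_{Q_{3}}\in\Aut(Q_{3})$ must fix $b=a_{1}$, a contradiction. This is where $Q_{3}$ earns its keep, and it only works because every automorphism of each $Q_{i}$ fixes its center pointwise. Your plan never isolates this property, and without it you cannot rule out the swap in $G$.

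Second, your parity claim is backwards. You say the center of $Q$ must have ``even order (so that $\inv$ acts non-trivially on it)''. Inversion acts non-trivially on $Z(Q)$ precisely when $Z(Q)$ has exponent greater than $2$; a center of order $2$ would make the $\inv$-twist invisible. The paper uses $\Size{Z(L_{p})}=3$, and odd prime order is exactly what makes the asymmetric gluing $a_{2}=a_{1}^{-1}$ genuinely different from the symmetric one $a_{2}=a_{1}$, while the $\inv$-twist converts one into the other.
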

Clearly the automorphism of the second condition lies in $\Aut(G,
\circ) \setminus \Aut(G)$. This example shows that $\Ic(G)$ may well
be a proper subset of $\Jc(G)$. 

Clearly $N \in \Ic(G)$ if and only if $\Aut(G) = \Aut(G, \circ)$.
However, in this general situation $\Hc(G)$ might be a proper subset
of $\Ic(G)$. This is shown by the following
\begin{proposition}\label{prop:non-iso}
  There  exist  perfect and centrally  indecomposable  groups  $Q_{1},
  Q_{2}$, and  a central product $G  = Q_{1} Q_{2}$ such that
  \begin{enumerate}
  \item $Q_{1}, Q_{2}$ are characteristic in $G$;
  \item the group $(G, \circ)$  obtained from $G$ by replacing $Q_{1}$
    with its opposite $G$ is not isomorphic to $G$.
  \end{enumerate}
  Moreover, $Q_{1}$ and $Q_{2}$ are still characteristic in $(G, \circ)$.
\end{proposition}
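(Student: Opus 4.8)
The plan is to exhibit an explicit example, and the natural place to look is the representation-theoretic construction promised in Section~\ref{sec:After-Holt} (the "After Holt" material, built with Derek Holt's help). We want a pair of perfect, centrally indecomposable $\Aut$-groups $Q_{1},Q_{2}$ with the property that, although both are characteristic in the central product $G=Q_{1}Q_{2}$, the two quotients $Q_{1}/Z(Q_{1})$ and $Q_{2}/Z(Q_{2})$ are \emph{isomorphic} as abstract groups — indeed we arrange $Q_{1}$ and $Q_{2}$ to be "twisted" copies of one another, amalgamating a common central subgroup $Z\cong Z(Q_{1})\cong Z(Q_{2})$, but with the identification of their centers chosen so that no isomorphism $G\to (G,\circ)$ exists. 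Concretely, I would take a quasisimple group $L$ with nontrivial Schur multiplier admitting an outer automorphism that acts nontrivially on the center (the prototype Holt explained to us), let $Q_{1}\cong Q_{2}\cong L$, and form $G$ by amalgamating the two centers via a fixed isomorphism; the opposite operation on $Q_{1}$ has the effect of composing that amalgamating isomorphism with inversion on $Z$, i.e.\ with a sign change, and this is precisely the obstruction we exploit.

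Granting the existence of such $L$, the verification splits into the following steps. \textbf{Step 1:} Check that $Q_{1}$ and $Q_{2}$ are characteristic in $G$. Since $G/Z(G)\cong (Q_{1}/Z)\times(Q_{2}/Z)$ and each factor is a perfect centrally indecomposable $\Aut(G)$-group, the Krull--Remak--Schmidt theorem for $\Aut(G)$-groups (as used just before Theorem~\ref{thm:main-for-centermore}, via~\cite[3.3.8]{Rob1996} together with Lemma~\ref{lemma:perfect}\eqref{item:central-trivial}) pins down the pair $\{A_{1},A_{2}\}$ of factors in $\Inn(G)$; I must then argue that an automorphism of $G$ cannot \emph{swap} $A_{1}$ and $A_{2}$, which is where the choice of $L$ (making the two "twisted copies" non-conjugate inside $\Aut(G)$, e.g.\ by a subtle difference in how outer automorphisms act on $Z$) does its work. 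The same Krull--Remak--Schmidt argument applied to $(G,\circ)$ gives the "Moreover" clause: replacing $Q_{1}$ by its opposite does not change $Q_{1}/Z$ or the way $\Aut$ permutes the factors, so $Q_{1}$ and $Q_{2}$ remain characteristic in $(G,\circ)$. \textbf{Step 2:} By Theorem~\ref{thm:cpd}\eqref{item:circ}, $(G,\circ)$ is the central product of $H=Q_{1}$ with its opposite operation and $K=Q_{2}$, amalgamating $Z(G)$; using the identification of elements of $G$ with equivalence classes of pairs in $H\times K$, write down the operation $(x,y)\circ(a,b)=(ax,yb)$ and record that this is isomorphic, as an abstract group, to the central product of $Q_{1}^{\opp}$ and $Q_{2}$ amalgamating $Z$ \emph{via the same amalgamating isomorphism}. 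Since $Q_{1}^{\opp}\cong Q_{1}$ via $\inv$, and $\inv$ reverses sign on the central subgroup $Z\le Q_{1}$, the group $(G,\circ)$ is isomorphic to $Q_{1}\mathbin{\cdot}Q_{2}$ amalgamated via $\iota_{Z}\circ(\text{inversion})$.

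\textbf{Step 3:} Prove $G\not\cong(G,\circ)$. Any isomorphism would, by Step~1 (characteristicity of the factors in both groups), have to carry $\{Q_{1},Q_{2}\}$ to $\{Q_{1},Q_{2}\}$, hence restrict to isomorphisms between the factors that are compatible with the two different amalgamating identifications of $Z$; one then reads off a forced relation between an automorphism of $L$ and inversion on $Z(L)$, and the defining property of our chosen $L$ (no automorphism of $L$ induces inversion on the relevant part of $Z(L)$, even after post-composing with automorphisms on the $Q_{2}$ side) yields a contradiction. This reduction to an invariant of $L$ alone — essentially "does some automorphism of $L$ act as $-1$ on a designated cyclic summand of $Z(L)$?" — is the crux, and it is exactly the point for which we defer to the explicit group produced in Section~\ref{sec:After-Holt}.

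The main obstacle is Step~1, and within it the claim that no automorphism of $G$ (respectively of $(G,\circ)$) swaps the two central factors. Characteristicity of each $Q_{i}$ is not automatic merely from centrally indecomposability: if $Q_{1}$ and $Q_{2}$ were isomorphic as $\Aut(G)$-groups the swap would be available. So the construction must make the two factors \emph{abstractly} isomorphic (forced, since we want $Q_{1}^{\opp}\cong Q_{1}\cong Q_{2}$) yet \emph{not} interchangeable by any automorphism of the ambient central product — a delicate constraint that is engineered precisely through the action on the amalgamated center and is the reason the example is nontrivial. Once the group from Section~\ref{sec:After-Holt} is in hand with its verified automorphism group, Steps~2 and~3 are short computations of the kind already carried out around Theorem~\ref{thm:cpd}.
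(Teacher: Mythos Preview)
Your proposal contains a genuine error that derails the construction. You assert that ``the construction must make the two factors abstractly isomorphic (forced, since we want $Q_{1}^{\opp}\cong Q_{1}\cong Q_{2}$)''. But $Q_{1}^{\opp}\cong Q_{1}$ holds for \emph{every} group via $\inv$, so nothing forces $Q_{1}\cong Q_{2}$. In fact, taking $Q_{1}\cong Q_{2}$ is precisely what makes your Step~1 impossible: if $\zeta:Q_{1}\to Q_{2}$ is any isomorphism, then the map swapping the two factors via $\zeta$ and $\zeta^{-1}$ is a well-defined automorphism of the central product (regardless of which identification of the centers you choose, since the amalgamated center has order $3$ and the swap either fixes it or inverts it, both of which are compatible). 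So $Q_{1}$ and $Q_{2}$ can never both be characteristic in $G$ under your hypotheses, and the obstacle you correctly flagged is not merely ``delicate'' but fatal.

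The paper's proof instead takes $Q_{1}$ and $Q_{2}$ to be two \emph{non-isomorphic} members $L_{p}$, $L_{q}$ of the family of Proposition~\ref{lemma:After-Holt} (different primes $p,q\equiv 1\pmod 3$), so that $Q_{1}/Z(G)\not\cong Q_{2}/Z(G)$. Then Krull--Remak--Schmidt immediately gives that $Q_{1}$ and $Q_{2}$ are characteristic in both $G$ and $(G,\circ)$, with no swap to worry about. For Step~3, any isomorphism $\theta:G\to(G,\circ)$ must then send each $Q_{i}$ to itself; on $Q_{1}$ it is an anti-automorphism, hence (since $\Aut(Q_{1})$ acts trivially on $Z(Q_{1})$ by Proposition~\ref{lemma:After-Holt}) it inverts $Z(G)$, while on $Q_{2}$ it is an automorphism and so fixes $Z(G)$ pointwise --- a contradiction. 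Note also that the relevant property of the $L_{p}$ is that $\Aut(L_{p})$ acts \emph{trivially} on $Z(L_{p})$, not (as you first wrote) that some outer automorphism acts nontrivially on the center.
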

Thus if $N$ is the regular subgroup corresponding to $(G, \circ)$ of this
Proposition, we have $N \in \Ic(G) \setminus \Hc(G)$.

So on  the one  hand not  all central  product decompositions  lead to
regular subgroups $N$  which are isomorphic to $G$. And  even when $N$
is isomorphic  to $G$, it is  not clear whether $\rho(G)$  and $N$ are
conjugate under an involution in $T(G)$, and therefore it is not clear
to  us at  the moment  whether $T(G)$  is elementary  abelian in  this
general case.

To  construct  the   groups  of  Proposition~\ref{prop:not-char}~and
\ref{prop:non-iso}, we  rely on the following family  of examples, based
on a construction that we have learned from Derek Holt.
\begin{proposition}\label{lemma:After-Holt}
  There exists a family of groups $L_{p}$, for  $p \equiv  1 \pmod{3}$  a prime,
  with the following properties:
  \begin{enumerate}
  \item the groups $L_{p}/Z(L_{p})$ are pairwise non-isomorphic, 
  \item the groups $L_{p}$ are perfect, and centrally
  indecomposable,
  \item $Z(L_{p})$ is of order $3$, and
  \item $\Aut(L_{p})$ acts trivially on $Z(L_{p})$.
  \end{enumerate}
\end{proposition}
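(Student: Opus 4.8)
The plan is to construct the groups $L_p$ explicitly as central products (or more precisely, as suitable quotients of direct products) of carefully chosen building blocks, so that each of the four required properties can be verified by direct computation. The natural source of building blocks is the special linear groups $\SL(2, p)$ (or $\SL(3, p)$), whose Schur multiplier and center we understand well: for $p$ odd, $\SL(2,p)$ is perfect with center of order $2$, and $\PSL(2,p)$ is simple. However, to get a center of order exactly $3$ we want a perfect group with a central subgroup of order $3$; the condition $p \equiv 1 \pmod 3$ is exactly what guarantees that $3 \mid p-1$, so that $\SL(3,p)$ has center $Z(\SL(3,p)) \cong \Z / \gcd(3, p-1)\Z = \Z/3\Z$. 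Thus I would take $L_p$ to be built from $\SL(3,p)$, possibly amalgamated with a second factor across the order-$3$ center to kill the outer diagonal/field automorphisms that might act nontrivially on the center.

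The key steps, in order, would be: \emph{(i)} verify that $\SL(3,p)$ is perfect and that its center has order $3$ when $p \equiv 1 \pmod 3$; \emph{(ii)} arrange centrally indecomposability --- this is the delicate point, since $\SL(3,p)$ might conceivably decompose as an $\Aut$-central product, so I would either show directly that $\SL(3,p)$ is centrally indecomposable (it is quasisimple, hence has no proper $\Aut$-invariant decomposition that is nontrivial), or pass to a twisted central product of two copies to force indecomposability; \emph{(iii)} compute $\Aut(L_p)$ well enough to see that it fixes $Z(L_p)$ pointwise --- for $\PSL(3,p)$ the outer automorphism group is (diagonal)$\times$(field)$\times$(graph), and one checks that the graph automorphism (transpose-inverse) and the diagonal automorphisms either fix or at worst invert the center, so one may need to pass to a suitable subgroup of $\Aut$ or use the fact that $p \equiv 1 \pmod 3$ to pin down the action, or else replace $L_p$ by the amalgam mentioned above so that any automorphism inverting the center of one factor is blocked by the other; and \emph{(iv)} verify pairwise non-isomorphism of the $L_p/Z(L_p)$, which is immediate because $|\PSL(3,p)|$ is a strictly increasing function of $p$, so different primes give different orders.

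The main obstacle I expect is step \emph{(iii)}: controlling the action of the full automorphism group on the order-$3$ center. A quasisimple group $L$ with $Z(L)$ of order $3$ has $\Aut(L)$ acting on $Z(L) \cong \Z/3\Z$ through a homomorphism to $\Aut(\Z/3\Z) \cong \Z/2\Z$, so the only question is whether the nontrivial element of this quotient is hit; the graph automorphism of $\SL(3,p)$ is precisely the candidate for doing so. The resolution --- which I believe is the point of Holt's construction --- is to form $L_p$ as a central product of \emph{two} copies of $\SL(3,p)$ (or of $\SL(3,p)$ with a second quasisimple group sharing the same order-$3$ center) glued along their centers in such a way that no automorphism of the amalgam can simultaneously act nontrivially on the shared center: any automorphism inverting the center would have to invert it on both factors compatibly, and the rigidity of the amalgamation, combined with the fact that the two factors cannot be swapped (they can be chosen non-isomorphic, e.g. $\SL(3,p)$ and a different quasisimple $3$-fold cover), forces trivial action. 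Once this rigidity is in place, properties \emph{(1)}, \emph{(2)} and \emph{(3)} follow routinely from the structure of the factors, and property \emph{(4)} is exactly what the construction was designed to deliver. The detailed verification of this rigidity, and the explicit choice of the second factor, is deferred to Section~\ref{sec:After-Holt}.
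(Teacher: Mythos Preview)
Your proposal has a genuine structural gap: the mechanism you propose for securing property~(4) directly destroys property~(2). You correctly identify that the transpose-inverse (graph) automorphism of $\SL(3,p)$ inverts the order-$3$ center, so $\SL(3,p)$ alone cannot serve as $L_{p}$. Your fix is to form $L_{p}$ as a central product of $\SL(3,p)$ with a second quasisimple group over the shared center, choosing the factors non-isomorphic so they cannot be swapped. But then each factor is a characteristic subgroup of $L_{p}$ (being the unique subnormal quasisimple subgroup of its isomorphism type), and $L_{p}$ is visibly the central product of these two $\Aut(L_{p})$-invariant subgroups --- i.e.\ $L_{p}$ is centrally \emph{decomposable}, contradicting~(2). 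The same happens if the two factors are isomorphic: either a swap automorphism exists (and then your rigidity argument for~(4) collapses) or it does not (and then the factors are again characteristic). There is no way to thread this needle with a central product of quasisimple groups.

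The paper resolves the tension differently: rather than amalgamating a second central factor, it attaches an \emph{abelian normal} factor on which the graph automorphism cannot act. Concretely, one takes $T_{p} = \PSL(3,p)$, chooses an irreducible $\F_{p}T_{p}$-module $N_{1}$ (a composition factor of the permutation module on points of the projective plane) that is \emph{not} isomorphic to its dual, forms the semidirect product $S_{p} = N_{1} \rtimes T_{p}$, and then takes $L_{p}$ to be the central extension of $S_{p}$ by the order-$3$ Schur multiplier $M(T_{p})$ (which survives as a summand of $M(S_{p})$ by Tahara's theorem). Any automorphism of $L_{p}$ must preserve $N_{1}$; but the graph automorphism of $T_{p}$ sends $N_{1}$ to its non-isomorphic dual, so it cannot lift to $\Aut(L_{p})$. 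Hence only inner and diagonal outer automorphisms of $T_{p}$ are induced, and these all centralise $M(T_{p}) = Z(L_{p})$, giving~(4). Central indecomposability~(2) holds because $L_{p}/Z(L_{p}) \cong S_{p}$ has $N_{1}$ as its unique minimal normal subgroup, so it is directly indecomposable. The missing idea in your approach is precisely this use of an asymmetric module, rather than a central factor, to obstruct the graph automorphism.
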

This is proved in Section~\ref{sec:After-Holt}.

\begin{proof}[Proof of Proposition~\ref{prop:not-char}]
  Let $Q_{1}, Q_{2}$ be two isomorphic copies of one of the groups of
  Proposition~\ref{lemma:After-Holt}, and $Q_{3}$ another group as in
  Proposition~\ref{lemma:After-Holt}, not isomorphic to $Q_{1},
  Q_{2}$.

  Fix an isomorphism $\zeta: Q_{1} \to Q_{2}$. If $Z(Q_{1}) =
  \Span{a_{1}}$, let $a_{2} = a_{1}^{\zeta}$. Let $Z(Q_{3}) =
  \Span{b}$.

  Consider the central product $G = Q_{1} Q_{2} Q_{3}$, amalgamating
  $a_{2} = a_{1}^{-1} = b$.

  Consider   the    quotient   $G/Z(G)$.    Since   the    groups   of
  Proposition~\ref{lemma:After-Holt} are centrally indecomposable, and
  have  pairwise   non-isomorphic  central   quotients,  Krull-Remak-Schmidt
  implies that $Q_{3}/Z(G)$  is characteristic in $G/Z(G)$,  and so is
  $Q_{1}  Q_{2}  / Z(G)$.  Therefore  $Q_{3}$  and $Q_{1}  Q_{2}$  are
  characteristic  in  $G$.   Moreover,  if there  is  an  automorphism
  $\alpha$ of $G$  that does not map $Q_{1}$ to  itself, then applying
  to  $Q_{1} Q_{2}  / Z(G)$  either Krull-Remak-Schmidt,  or the  results of
  \cite[Theorem~3.1]{Bid},   and   using    either   the   fact   that
  $Q_{1}/Z(Q_{1})$ and $Q_{2}/Z(Q_{2})$ are  perfect, or that they are
  centerless, we obtain that $\alpha$ exchanges $Q_{1}$ and $Q_{2}$.

  Therefore $\alpha\restriction_{Q_{1}}  \zeta^{-1}$  is an
  automorphism  of   $Q_{1}$,  and  thus  maps   $a_{1}$  to  $a_{1}$.
  Therefore  $\alpha$  maps  $a_{1}$   to  $a_{1}^{\zeta}  =  a_{2}  =
  a_{1}^{-1}$.  But $\alpha\restriction_{Q_{3}}$ is an automorphism of
  $Q_{3}$, and thus fixes $b = a_{1}$, a contradiction.

  Consider now the group $(G, \circ)$ obtained by replacing $Q_{1}$
  with its opposite. Now the map which is the identity on $Q_{3}$,
  $\zeta \inv$ on $Q_{1}$ and $\zeta^{-1} \inv$ on $Q_{2}$ induces an
  automorphism of $(G, \circ)$ which exchanges $Q_{1}$ and $Q_{2}$. In
  fact we have for $x, y \in Q_{1}$
  \begin{equation*}
    (x \circ y)^{\zeta \inv}
    =
    (y x)^{\zeta \inv}
    =
    (y^{\zeta} x^{\zeta})^{\inv}
    =
    x^{\zeta \inv} y^{\zeta \inv}, 
  \end{equation*}
  and for $x, y \in Q_{2}$
  \begin{multline*}
    (x \circ y)^{\zeta^{-1} \inv}
    =
    (x y)^{\zeta^{-1} \inv}
    =
    (x^{\zeta^{-1}} y^{\zeta^{-1}})^{\inv}
    =\\=
    y^{\zeta^{-1} \inv} x^{\zeta^{-1} \inv}
    =
    x^{\zeta^{-1} \inv} \circ y^{\zeta^{-1} \inv}.
  \end{multline*}
  Moreover $a_{1}^{\zeta  \inv} = a_{2}^{\inv} =  a_{2}^{-1} = a_{1}$,
  and $a_{2}^{\zeta^{-1}  \inv} = a_{1}^{\inv} =  a_{1}^{-1} = a_{2}$,
  which is compatible with the identity on $Q_{3}$.
\end{proof}

\begin{proof}[Proof of Proposition~\ref{prop:non-iso}]
  Let $Q_{1}, Q_{2}$ to be two non-isomorphic groups as in
  Proposition~\ref{lemma:After-Holt}, and let $G = Q_{1} Q_{2}$,
  amalgamating the centers. Consider the group $(G, \circ)$ obtained
  by replacing $Q_{1}$ with its opposite. If the map $\theta : G \to G$
  is an isomorphism of $G$ onto $(G, \circ)$, by the arguments of the
  previous proofs it has to map
  each $Q_{i}$ to 
  itself. Then $\theta$ induces an anti-automorphism on $Q_{1}$, thus
  inverting $Z(G)$, and an automorphism on $Q_{2}$, thus fixing $Z(G)$
  elementwise, a contradiction.
\end{proof}

\section{Proof of Proposition~\ref{lemma:After-Holt}}
\label{sec:After-Holt}

Consider the groups $T_{p} = \PSL(3,  p)$, where $p \equiv 1 \pmod{3}$
is a prime, and let $\F_{p}$ be the field with $p$ elements.

It is well known (\cite[Theorem 3.2]{simple}) that
the outer automorphism group of $T_{p}$ is isomorphic to $S_{3}$, where an
automorphism $\Delta$ of order $3$ is diagonal, obtained via conjugation with
a suitable $\delta \in \PGL(3, p)$, and one of the
involutions is the transpose inverse automorphism $\top$.

Moreover, the Schur multiplier $M(T_{p})$ of $T_{p}$ has order $3$
\cite[3.3.6]{simple}, it is inverted
by $\top$, and clearly centralized by $\Delta$.

Let $P$ be  the natural $\F_{p}$-permutation module of  $T_{p}$ in its
action on the  points of the projective plane.  $P$  is the direct sum
of a copy of  the trivial module, and of a  module $N$.  The structure
of $N$  is investigated  in~\cite{ZaSup},~\cite{Abd}.   In particular,  it is
shown in~\cite{Abd} that $N$ has  a unique composition series $\Set{0}
\subset N_{1} \subset N$, such that  $N_{1}$ and $N_{2} = N/N_{1}$ are
dual to  each other, exchanged by  $\top$, but not isomorphic  to each
other.   Note that  $\Delta$  still  acts on  $P$,  as  it comes  from
conjugation with  an element  $\delta \in \PGL(3, p)$,  and thus also  acts on
$N_{1}$ and $N_{2}$.

Consider  the  natural  semidirect   product  $S_{p}$  of  $N_{1}$  by
$T_{p}$. It has been shown by K.I.~Tahara~\cite{Tahara} that the Schur
multiplier of $T_{p}$  is a direct summand of the  Schur multiplier of
$S_{p}$, that is, $M(S_{p}) = M(T_{p})  \oplus K$ for some $K$. We may
thus consider the central extension  $L_{p}$ of $M(T_{p})$ by $S_{p}$,
which is  the quotient of the  covering group of $S_{p}$  by $K$. Thus
$Z(L_{p}) =  M(T_{p})$ (Derek  Holt has shown  to us  calculations for
small primes, based on the description of~\cite{Tahara}, which appear to
indicate that actually $K = \Set{0}$ here.)

An automorphism $\sigma$ of $L_{p}$ induces automorphisms $\alpha$ of
$N_{1}$ and $\beta$ of $T_{p}$. Abusing notation slightly, we have,
for $n \in N_{1}$ and $h \in T_{p}$,
\begin{equation*}
  (n^{h})^{\alpha} 
  = 
  (n^{h})^{\sigma} 
  =
  (n^{\sigma})^{h^{\sigma}}
  =
  (n^{\alpha})^{h^{\beta}}.
\end{equation*}
If $\beta$ is an involution in $\Out(T_{p})$, say $\beta = \Delta^{-1}
\top \Delta$, then we have
\begin{equation*}
  h^{\beta} = \delta^{-2} h^{\top} \delta^{2},
\end{equation*}
so that
\begin{equation*}
  (n^{h})^{\alpha \Delta^{-2}} 
  =
  (n^{\alpha \Delta^{-2}})^{h^{\top}},
\end{equation*}
that is, $\alpha \Delta^{-2}$ is an isomorphism of $N_{1}$ with its
dual, a contradiction. Then 
$\Aut(L_{p})$ induces on $T_{p}$ only inner automorphisms and at most outer
automorphisms of order $3$, all of which centralize $M(T_{p}) = Z(L_{p})$.

\providecommand{\bysame}{\leavevmode\hbox to3em{\hrulefill}\thinspace}
\providecommand{\MR}{\relax\ifhmode\unskip\space\fi MR }
\providecommand{\MRhref}[2]{%
  \href{http://www.ams.org/mathscinet-getitem?mr=#1}{#2}
}
\providecommand{\href}[2]{#2}

\end{document}